\let\originalleft\left
\let\originalright\right
\renewcommand{\left}{\mathopen{}\mathclose\bgroup\originalleft}
\renewcommand{\right}{\aftergroup\egroup\originalright}
\newcommand{\abs}[1]{\left\vert #1 \right\vert}
\newcommand{\ind}[1]{\mathds{1}{\left\lbrace #1 \right\rbrace}}
\newcommand{\R}{\mathbb{R}}
\newcommand{\Z}{\mathbb{Z}}
\newcommand{\dd}{\mathrm{d}}
\newcommand{\opnot}[1]{\mathsf{#1}}
\newcommand{\p}{\mathbb{P}}
\newcommand{\e}{\mathbb{E}}
\DeclareMathOperator*{\cov}{\mathbb{C}ov}
\DeclareMathOperator*{\var}{\mathbb{V}ar}
\renewcommand{\tilde}{\widetilde}
\renewcommand{\hat}{\widehat}
\begin{document}
	
\section{Introduction}
Determining whether a data set constitutes a random sample from a given distribution is one of the central problems in nonparametric statistics. A common procedure is the standard Kolmogorov--Smirnov (KS) test for real-valued observations $X_1,\ldots,X_n$. 
The KS test considers the hypotheses
\begin{equation}
\label{eq:basic-hypothesis}
H_0 \colon X_i \underset{\text{iid}}{\sim} F_0,
\qquad
\text{vs.}
\qquad
H_1 \colon \text{otherwise}.
\end{equation}
Here $F_0$ is some distribution specified as part of $H_0$. The procedure computes the KS test statistic
\[
\hat{D}_n = \sqrt{n} \sup_{x \in {\mathbb R}} \big|\hat{F}_n(x) - F_0(x)\big|
\qquad
\text{with}
\qquad
\hat{F}_n(x) = \frac{1}{n}\sum_{i=1}^n \ind{X_i \le x},
\]
where $F_0$ is the cdf and $\hat{F}_n$ is the empirical cumulative distribution function (ecdf). 

The KS test does not prescribe or require any sorting of the observations. Even if $X_1,\ldots,X_n$ are fully sorted or partially sorted, this does not affect the test statistic $\hat{D}_n$. In other words, the KS test is insensitive to the order of the data. In this paper we generalize KS by introducing and studying a modified procedure that is based on partially sorting $X_1,\ldots,X_n$. The advantage of using partial sorting is that it makes the modified procedure sensitive to the order as well as the underlying distribution of the data. We study the properties of the modified procedure and provide examples in which KS is outperformed by our generalization. Before describing the main idea in more detail, let us review some aspects of KS.

For any continuous $F_0$ under $H_0$, as $n$ grows, the sequence $\hat{D}_n$ converges in distribution to a random variable $D$ that follows the Kolmogorov distribution with cdf
\begin{equation}
    \label{eq:kolmogorov-cdf}
{\mathbb P}(D \le x) = 1-2 \sum_{k=1}^{\infty}(-1)^{k-1} e^{-2 k^{2} x^{2}}.
\end{equation}
The quantiles of the Kolmogorov distribution are easily computed numerically, so the $p$-value or the test decision for a pre-specified confidence level $\alpha$ can be easily determined. The distribution of $D$ is derived based on a Brownian bridge approximation for the process $\sqrt{n} \big(\hat{F}_n(x) - F_0(x)\big)$. See for example \cite[Sec.\ 19.3]{vandervaart1998} for details or the initial publications \cite{kolmogorov1933sulla, smirnoff1939ecarts, smirnov1939estimation}. An overview of additional recent work related to KS tests is provided below.

\begin{figure}
\centering
\includegraphics[width=\linewidth]{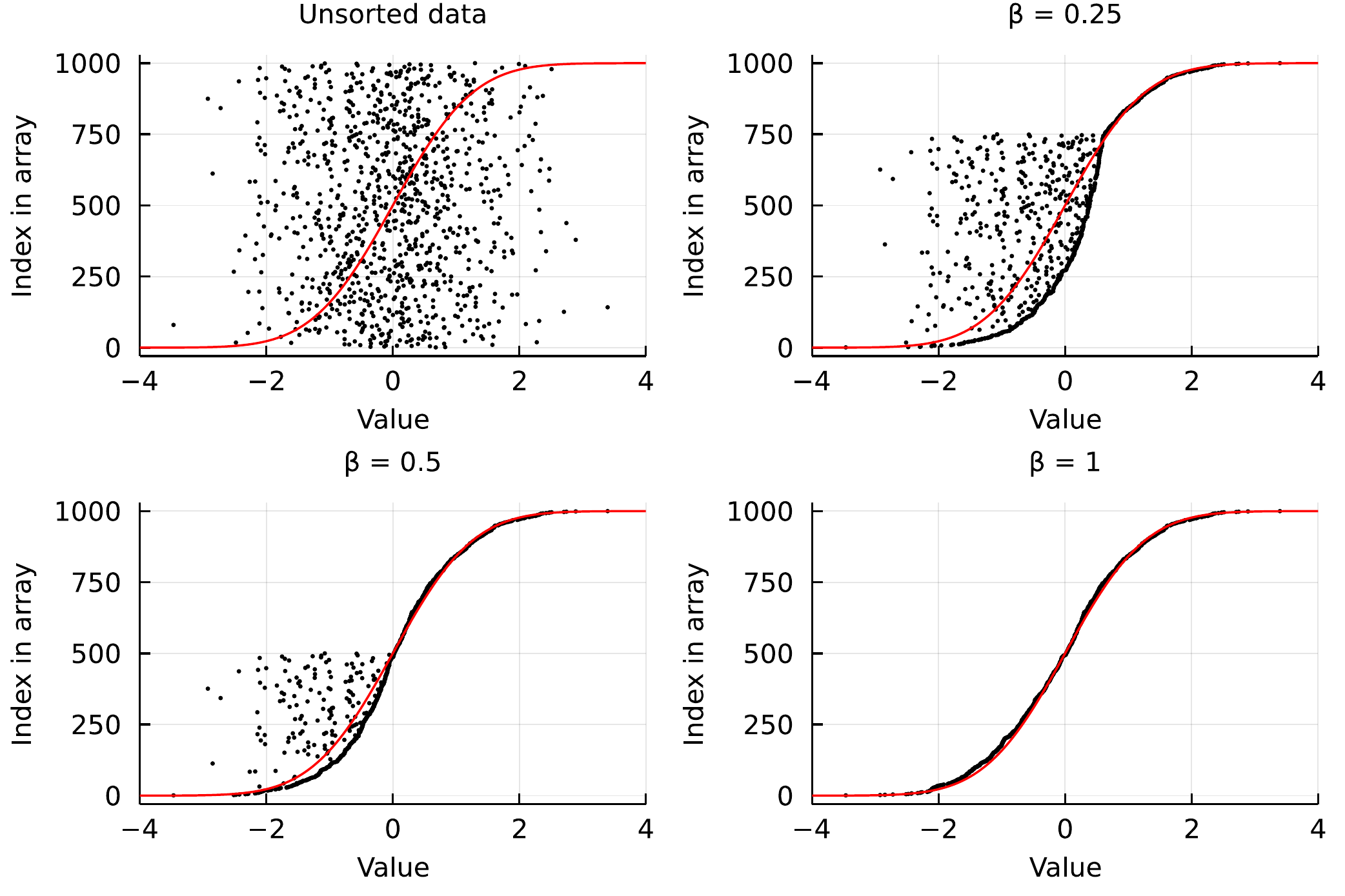}
\caption{iid\ normal data ($n=1,000$): Increasing levels of partial sorting with no sorted data (top left) to fully sorted data (bottom right). The scaled normal cdf is plotted in red.
}
\label{fig:basic_example}
\end{figure}

Importantly, KS tests are not only used for testing the distribution $F_0$ but also for the independence stated under $H_0$ or lack thereof. As an extreme example where this independence certainly does not hold, assume that $X_1=X_2=\ldots=X_n$, with $X_1$ distributed according to $F_0$. In this case
\[
\hat{D}_n = \sqrt{n} \max \big\{F_0(X_1),1-F_0(X_1)\big\},
\]
which is distributed uniformly in the range $[\sqrt{n}/2,\sqrt{n}]$. Now with a specified type~I error of $\alpha$, the null hypothesis $H_0$ is rejected almost with certainty as $n$ grows. For example, with $\alpha = 0.01$, the $0.99$th quantile of the Kolmogorov distribution is approximately $1.68$. Then $H_0$ is already rejected with certainty for $n = 12$, since $\sqrt{12}/2 \approx 1.73$. This extreme example indicates that in general KS statistics may be used to test for independence. Similarly, even if the lack of independence is not as extreme as having all observations identical, a KS test may often reject $H_0$ if the observations exhibit a form of dependence. It is thus common to use KS tests for testing lack of independence. 

Another important statistical test, focusing specifically on independence, is the Wald--Wolfowitz (WW) test \cite{wald1940test}. It is based on the runs statistic, which counts the number of times a data point above the median is followed by a data point below the median or vice versa. While both KS and WW may be used to test for lack of independence, these two tests focus on two different aspects of the data. The KS test takes the distribution of the data into account but is oblivious to its order. This means that it is sensitive to changes in the values of the data but not to changes in the order of the data. In contrast, the WW test relies on a statistic that takes the order of the data into account but is oblivious to the distribution. This means that it is sensitive to changes in the order of the data but not to changes in the values of the data, especially if data points remain on the same side of the median. 

Ideally, we would like to use a statistic that is sensitive to both aspects. For this we present a new nonparametric paradigm based on {\em partial sorting}. We believe that the ideas that we present here may lead to a new suite of tests that may detect in certain cases a lack of independence better than current methods. We develop and analyze one such specific test in this paper. We also present examples of cases where partial sorting based tests are valuable.

The overarching idea of partial sorting based tests is to apply a sorting algorithm to the data without running the algorithm to completeness. This leads to a partially sorted sample, say  $X_1^\beta,\ldots,X_n^\beta$, where $\beta \in (0,1]$ indicates the level of partial sorting and is referred to as the {\em sorting level}. In the extreme case of $\beta = 1$ the sample is fully sorted, while for lower values of $\beta$ the sample is less sorted. A test statistic is then computed based on the partially sorted sample $X_1^\beta,\ldots,X_n^\beta$. For the test statistic that we present here, there is a known asymptotic distribution under $H_0$ that is not influenced by $F_0$. Our results then yield a statistical test similar in nature to KS, but based on the partially sorted sample and often more sensitive to a lack of independence.

\begin{figure}
\centering
\includegraphics[width=\linewidth]{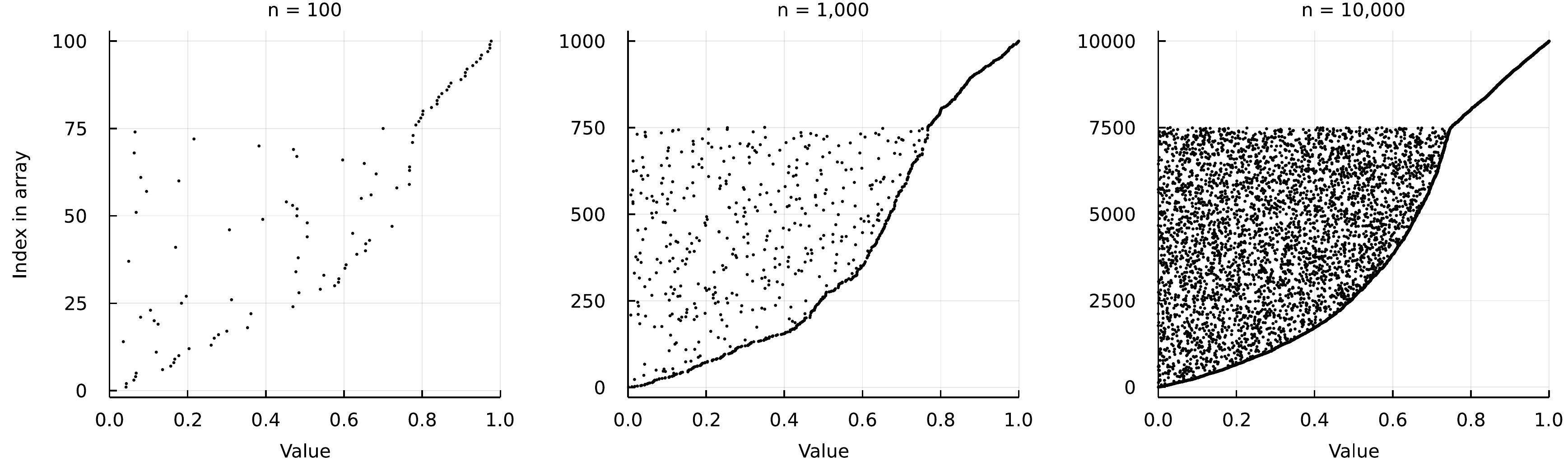}
\caption{{iid\ uniform data ($\beta = 0.25$): The empirical bubble sort curve is the right frontier of the points. It converges to a deterministic curve as $n \to \infty$.
}}
\label{fig:convergence_uniform}
\end{figure}

With $\beta < 1$ the partial sorting approach halts the sorting algorithm in mid flight. Thus the specific sorting algorithm at hand plays a key role in the analysis: different sorting algorithms generate different probability laws for $X_1^\beta,\ldots,X_n^\beta$. In this paper our sole focus is on partial sorting with the classic and simple {\em bubble sort} algorithm, which is a strong contender for being the most intuitive sorting algorithm. See for example \cite[Section~5.2.2]{knuth1997art} or \cite{astrachan2003bubble} for an historical overview. As its name may suggest, bubble sort works by `bubbling up' observations based on pairwise comparisons of adjacent values. In each case where adjacent observations are not in order they are swapped. Each such pass on the data is called an iteration and the algorithm applies $n$ iterations to yield a sorted sample. With bubble sort, a natural interpretation of $\beta$ is that the number of iterations performed is the nearest integer to~$\beta n$.

Our bubble sort based procedure generalizes the KS test, since it exactly agrees with KS if $\beta = 1$. Importantly, we show examples where using $\beta < 1$ outperforms both KS and WW. Our purpose with this paper is to show viability of this method together with a complete derivation of the properties of associated stochastic processes needed to obtain the asymptotic distribution of the test statistic. In doing so, we derive a generalization of the Kolmogorov distribution which we call the {\em generalized Kolmogorov distribution} with random variable denoted $D^\beta$. Tabulation of the cdf and/or quantiles of $D^\beta$ is simply a matter of computation similar to the Kolmogorov distribution, and like the Kolmogorov distribution our generalized Kolmogorov distribution is not influenced by $F_0$. Further, the underlying stochastic process that yields the derivation of $D^\beta$ is based on a generalization of the Brownian bridge process used for KS. Our analysis includes a Glivenko--Cantelli like (strong law of large numbers) theorem for this process, which is followed by a second order analysis establishing convergence of probability measures.

To get a feel for potential applicability of partial sorting with bubble sort consider Figure~\ref{fig:basic_example} which is based on an iid sequence of $n=1,000$ observations from a standard normal distribution. The figure presents the original data, followed by a display of the partially sorted data for $\beta = 0.25$, $\beta = 0.5$, and finally $\beta = 1$ (fully sorted). The nature of bubble sort is that with sorting level $\beta$ the highest valued $\beta n$ (rounded to an integer) observations are perfectly sorted in the respective top positions of the array but the remaining $(1-\beta)n$ observations are only partially sorted. Nevertheless as is evident from the figure a pattern emerges. Specifically observe the right frontier in the case of $\beta = 0.25$ and $\beta = 0.5$ and note that it follows a regular shape different from the cdf of the observations. We use this regular pattern to define the {\em bubble sort curve} in the next section and make use of it throughout the paper. In fact, we are able to characterize the bubble sort curve in terms of $F_0$ and $\beta$.

Our statistical method compares the bubble sort curve expected under $H_0$ with an empirical bubble sort curve obtained from the right frontier of the data. This operates in a manner that resembles the KS test. Remarkably, like in the case of the KS test we are able to characterize the deviations between the empirical bubble sort curve and the theoretical curve under $H_0$. Specifically, as $n \to \infty$, the scaled deviations between the curves converge to a well defined Gaussian process. We use this process to develop a test statistic with a distribution under $H_0$ which is agnostic to $F_0$ and thus directly generalizes the Kolmogorov distribution associated with the KS test.

To get a feel for the large sample asymptotics consider Figure~\ref{fig:convergence_uniform} which is based on standard iid uniform observations, each partially sorted with $\beta = 0.25$. The figure hints that as $n \to \infty$ the empirical bubble sort curve (the right frontier of the data) converges. In Theorem~\ref{thm:bubble_sort_lln} we establish the associated first order convergence and further in Theorem~ \ref{thm:bubble_sort_diffusion} we study the fluctuations around this frontier and characterize a limiting stochastic process of these fluctuations. Finally in Theorem~\ref{thm:gKS_convergence} we compute the distribution of a statistic arising from the fluctuations, similar to the KS statistic.

Similarly to the analysis that surrounds the KS test, our analysis is based both on a law of large numbers type theorem and a second order theorem dealing with convergence of probability measures. The former is an analogue of the celebrated Glivenko--Cantelli theorem, see \cite[Ch.~19]{vandervaart1998} and \cite{glivenko1933sulla}. The latter is analogous to Donsker's theorem for cdfs; see  \cite{donsker1952justification}. Hence our work is an extension of the basic development of the asymptotics of the KS test as in \cite{donsker1952justification}, \cite{doob1949heuristic}, and \cite{feller1948limittheorems}. Specifically we develop process level convergence that is similar in spirit to the process level convergence which appears in the analysis of the KS test.

A good historical account of Kolmogorov's original paper \cite{kolmogorov1933sulla} can be found in \cite{stephens1992appreciation}. Since that initial work, there have been dozens of extensions and generalizations, different in nature to our work here. We now mention a few notable publications in this space. In \cite{mason1983modified} the authors consider R\'enyi-type statistics (see \cite{renyi1973group}) and a adapt them to create modified KS tests that are sensitive to tail alternatives. In \cite{moscovich2016exact} the authors continue investigation of fitting tails of the distribution and study exact properties of the so-called $M_n$ statistics of Berk and Jones \cite{berk1979goodness}. Related is the recent work \cite{finner2018two} dealing with two-sample KS type tests, using local levels. In \cite{jager2007goodness} the authors consider goodness of fit tests via $\phi$-divergences. Beyond these works, many other papers have explored variations and adaptations of KS tests; see the notable papers \cite{cmiel2020intermediate, friedman1979multivariate, gontscharuk2017asymptotics, gontscharuk2015intermediates, gontscharuk2016goodness, janssen2000global, kim2015power}. Nevertheless, to the best of our knowledge, the use of partial sorting to enhance goodness of fit procedures, as we present here, is new.

The remainder of this paper is structured as follows. In Section~\ref{sec:gen-procedure} we define the bubble sort algorithm and the goodness of fit test that uses the partially sorted sample. This also includes the definition of the generalized Kolmogorov distribution. In Section~\ref{sec:mainResults} the main theoretical results of this paper our presented. Then sections \ref{sec:lln}, \ref{sec:weak-convergence}, and \ref{sec:analyzing} contain the derivations and proofs of the results. In Section~\ref{sec:lln} we establish the law of large numbers type convergence results. In Section~\ref{sec:weak-convergence} we establish the weak convergence results. In Section~\ref{sec:analyzing} these results are assembled to obtain the distribution of $D^\beta$. In Section~\ref{sec:numerical} we present numerical examples of our proposed procedure and we conclude in Section~\ref{sec:discussion}.

\section{The Test Procedure}
\label{sec:gen-procedure}

\begin{figure}
\centering
\includegraphics[width=14cm]{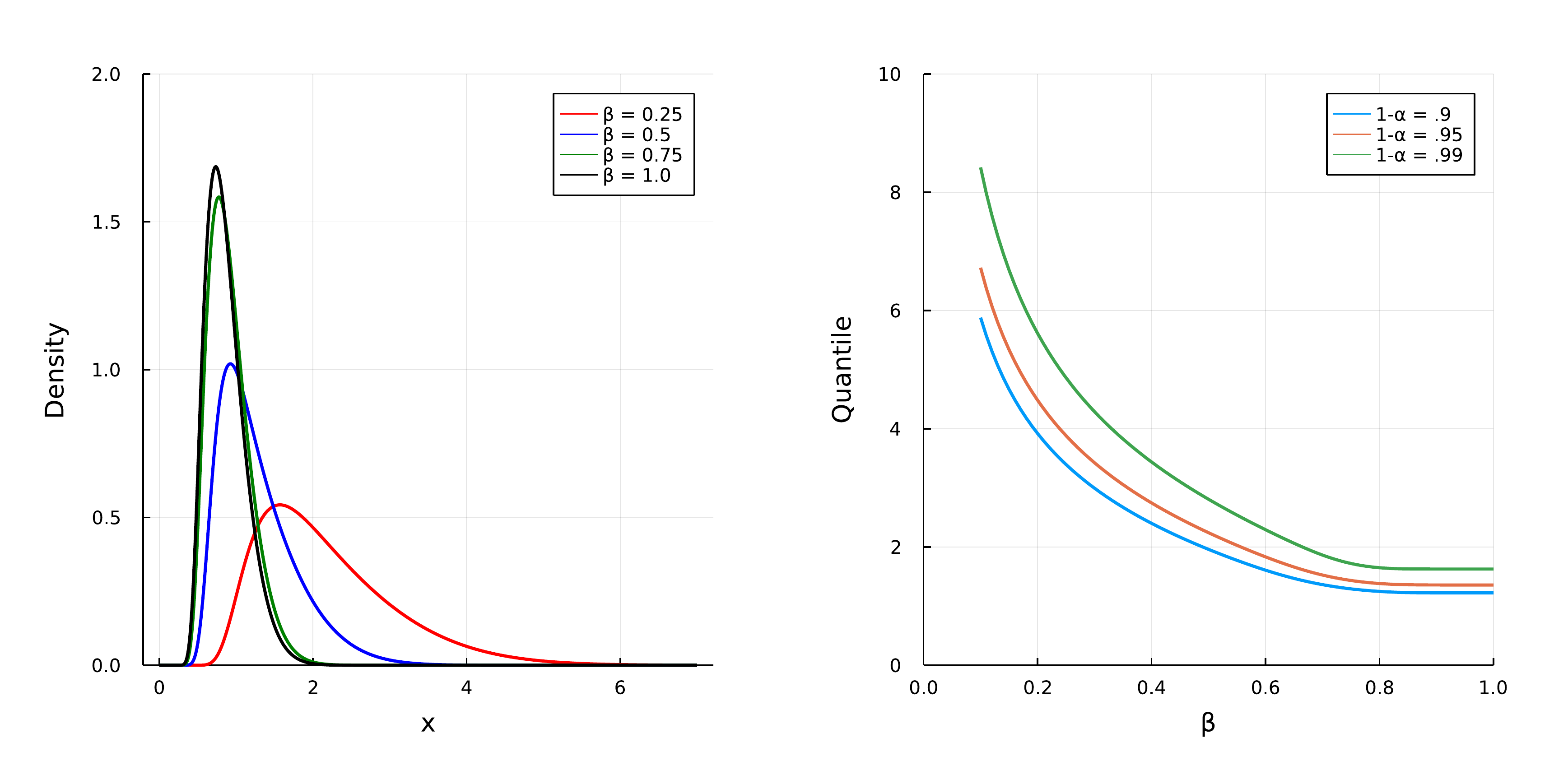}
 \caption{\label{fig:densities} Left: Densities of the generalized Kolmogorov distribution for several sorting levels (the case $\beta = 1$ is the Kolmogorov distribution). Right: Criticial value quantiles as a function of sorting levels.}
\end{figure}

{The bubble sort test considers the null hypothesis that the data consist of independent observations from a given continuous distribution $F_{0}$.} 
The procedure starts by partially sorting the data and computing the ecdf of the running maximum of the partially sorted data. We call this ecdf the empirical bubble sort curve. Then, similarly to the KS test, insight into the stochastic behavior of the empirical bubble sort curve under $H_0$ allows us to compare it to the bubble sort curve associated with $F_0$ and $\beta$. The distance between these two curves yields the bubble sort statistic. Finally, we compare the value of this statistic to the quantiles of its theoretical distribution under $H_0$, which is a generalized Kolmogorov distribution that only depends on $\beta$ and not on the underlying distribution $F_{0}$. See Figure~\ref{fig:densities} where we present densities for a few selected $\beta$ values as well as common quantiles as a function of $\beta$. Note that at $\beta = 1$ the distribution is the Kolmogorov distribution \eqref{eq:kolmogorov-cdf} and as is evident from the quantile plots, for $\beta$ in the approximate range of $0.7$--$1.0$ the generalized Kolmogorov distributions are very close to the Kolmogorov distribution. This is due to the fact that with bubble sort and such high $\beta$ values while the sample isn't guaranteed to be fully sorted it almost always is.

Steps~1--5 below describe the {\em bubble sort test procedure} for a data sample $x_1,\ldots,x_n$. Key objects in this description include the partially sorted sample \eqref{eq:partial-bs}, the empirical bubble sort curve \eqref{eq:emp-bubble-sort-curve}, the limiting bubble sort curve \eqref{eq:bubble-sort-curve1}, the bubble sort statistic \eqref{eq:bubble-sort-stat}, and the generalized Kolmogorov distribution \eqref{eq:gen-kolg-dist}. This distribution is defined via probabilities of the form \eqref{eq:gen-bb-prob} associated with Brownian bridges.
\newpage
\begin{itemize}
    \item[\textbf{1}]
    \textbf{Partial sorting:}
    Take $k$ as the nearest integer to $\beta n$ and apply $k$ bubble sort iterations to the data. The result is the partially sorted sample $x_1^\beta,\dotsc,x_n^\beta$. A bubble sort iteration on a data array $v$ with elements $v[1],\ldots,v[n]$ is defined via the operator $\opnot{T}$, which is the composition $\opnot{S}_{n-1} \circ \dotsb \circ \opnot{S}_{1}$ with
    \begin{align*}
        \opnot{S}_{i} (v) =
            \begin{cases}
                    \big(v[1] , \dotsc , v[i-1], v[i+1], v[i], v[i+2], \dotsc , v[n]\big) & \text{ if } v[i] > v[i+1], \\
                    \big(v[1] , \dotsc , v[n]\big) & \text { else. }
            \end{cases}
    \end{align*}
    The partially sorted sample $x_1^\beta,\dotsc,x_n^\beta$ is then given by
    \begin{align}
        \label{eq:partial-bs}
    (x_1^\beta,\ldots,x_n^\beta) = \opnot{T}^k(x_1,\ldots,x_n).
    \end{align}
    \item[\textbf{2}]
    \textbf{Empirical bubble sort curve:}
    Obtain the {\em empirical bubble sort curve} $\hat{B}_n^\beta$ via
    \begin{align}
        \label{eq:emp-bubble-sort-curve}
        \hat{B}_n^\beta (x) = \frac{1}{n} \sum_{i = 1}^{n} \ind{\max \lbrace x_1^\beta, \dotsc, x_i^\beta \rbrace \leq x}.
    \end{align}
    This is the ecdf of the running maximum of the partially sorted sample.
    \item[\textbf{3}]
    \textbf{Bubble sort curve:}
    Determine the {\em bubble sort curve} $B_0^\beta$ associated with the distribution $F_0$ and sorting level $\beta$ via
    \begin{align}
        \label{eq:bubble-sort-curve1}
        B_0^\beta(x) = 
        \min\Big\{\frac{\beta}{1-F_0(x)}, 1\Big\} - \min\big\{\beta,1-F_0(x)\big\}.
    \end{align}
    \item[\textbf{4}]
    \textbf{Bubble sort statistic:}
    Compute the {\em bubble sort statistic} $\hat{D}_n^\beta$ via
    \begin{align}
        \label{eq:bubble-sort-stat}
        \hat{D}_n^\beta = \sup_{x \in \mathbb{R}} \sqrt{n} \big|\hat{B}_n^\beta (x) - B_0^\beta(x)\big|.
    \end{align}
    \item[\textbf{5}]
    \textbf{Test conclusion:}
    Compare the value of the bubble sort statistic $\hat{D}_n^\beta$ to the quantiles of its asymptotic distribution $D^\beta$ under $H_0$. This is a generalized Kolmogorov distribution with cdf
    \begin{align}
    \label{eq:gen-kolg-dist}
        \p(D^\beta \leq x) = 2\int_0^{\sqrt{\tfrac{\beta}{1-\beta}} x} \Psi \Big( x; \tfrac{1-\beta}{\beta}, \sqrt{\tfrac{1-\beta}{\beta}} z \Big) \Psi \Big( x;\beta, \sqrt{\beta(1-\beta)} z \Big) \phi(z) \, \dd z,
    \end{align}
    where
    \begin{align}
        \label{eq:gen-bb-prob}
    \Psi (x;T,a) = 
        \begin{cases}
            \sum_{k\in\Z} (-1)^k \exp \Big(-\frac{2kx(kx-a)}{T} \Big) & \text{ if } x > |a|,\\
            0 & \text{ if } x \leq |a|,
        \end{cases}
    \end{align}
    and $\phi$ is the standard normal density function.
    The resulting asymptotic $p$-value is $\p(D^\beta > \hat{D}_n^\beta)$ provided $F_{0}$ is continuous.
\end{itemize}

In the boundary case $\beta = 1$, this procedure reduces to the KS test. Although the expression in \eqref{eq:gen-kolg-dist} is not defined for $\beta = 1$, it is readily verified that, as $\beta \nearrow 1$, the test statistic $\hat{D}_n^\beta$ converges weakly to the Kolmogorov distribution with cdf~\eqref{eq:kolmogorov-cdf}. Tabulation of the cdf and quantiles for generalized Kolmogorov distributions of the form~\eqref{eq:gen-kolg-dist} is computationally straightforward. See also the accompanying GitHub repository for this paper~\cite{bs-github}. 

Figure~\ref{fig:test-illustration} illustrates the procedure via $n=100$ iid\ standard uniform random variables with sorting level $\beta = 0.25$. The associated empirical bubble sort curve and the limiting bubble sort curve are plotted in the left display. The scaled difference between the curves is plotted in the right display and is used to compute the test statistic. In this example the value of the test statistic is $1.598$. Based on the generalized Kolomogorov distribution for $\beta = 0.25$ (see Figure~\ref{fig:densities}), the corresponding asymptotic $p$-value is $0.701$. Thus $H_0$ is not rejected. Further examples are in Section~\ref{sec:numerical}.

\begin{figure}
\centering
\includegraphics[width=14cm]{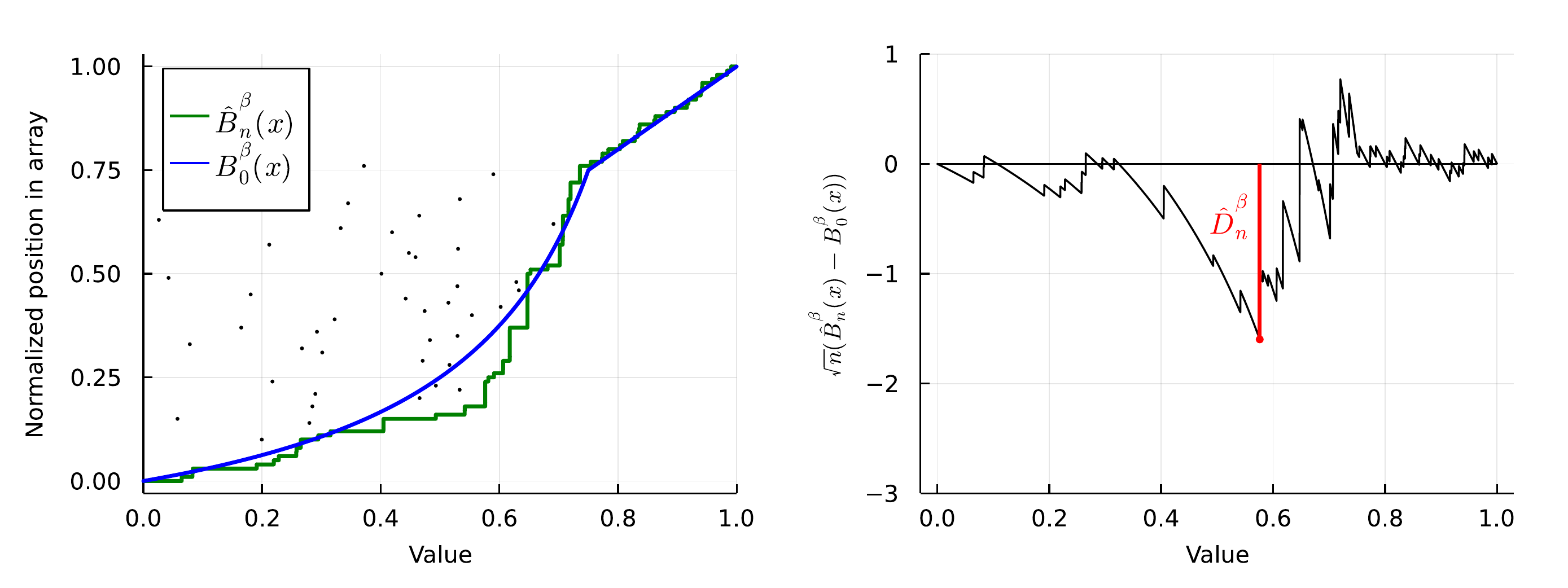}
\caption{\label{fig:test-illustration}An illustration of the test procedure with $\beta = 0.25$. Left: The empirical bubble sort curve (green) together with the bubble sort curve (blue). Right: The scaled difference between the curves with the maximal absolute value marking the test statistic $\hat{D}_n^\beta$.}
\end{figure}

\section{Main Results}
\label{sec:mainResults}

We now present the main results supporting the validity of the bubble sort test procedure. Our context in this section and the three sections that follow is probabilistic. We therefore avoid any specific references to the test procedure and instead focus on the probabilistic results valid under $H_0$. In the remainder we fix a sorting level $\beta$ in $(0,1]$ and a distribution $F_0$ for the iid\ random variables $X_1,\dotsc,X_n$. Some of the results below hinge on assuming $F_0$ is continuous, and we state this explicitly when needed.

The key object we investigate is the bubble sort curve \eqref{eq:bubble-sort-curve1}, which can also be more explicitly represented via
\begin{align}
\label{eq:bubble_sort_curve}
B_0^\beta(x) = 
\begin{cases}
\beta \frac{F_0(x)}{1 - F_0(x)} & \text{ if } x < x^{*}_\beta,\\
F_0(x) & \text{ else,}
\end{cases}
\qquad
\text{where}
\qquad
x^{*}_\beta = F_0^{-1} (1 - \beta). 
\end{align}

Observe that since for $x < x^{*}_\beta$ the bubble sort curve is the scaled odds of the probability $F(x)$, we can recover the cdf $F_0(\cdot)$ from $B_0(\cdot)$ using $F_0(x) = B_0^\beta(x) / (\beta + B_0^\beta(x))$ for such $x$. Thus, analogous to the cdf, the bubble sort curve is a nondecreasing, right-continuous function that fully characterizes its underlying distribution. 

The empirical bubble sort curve, $\hat{B}_n^\beta(\cdot)$ defined in \eqref{eq:emp-bubble-sort-curve}, plays the same role with respect to $B_0^\beta(\cdot)$ as the ecdf plays with respect to the cdf. Here it is constructed as a random process with index $x \in {\mathbb R}$ based on the random sample $X_1,\ldots,X_n$,
\[
\hat{B}_n^\beta(x) = \frac{1}{n} \sum_{i=1}^n \ind{\max \lbrace X_1^\beta, \dotsc, X_i^\beta \rbrace \leq x},
\quad
\text{where}
\quad
X_1^\beta,\ldots,X_n^\beta = \opnot{T}^k(X_1,\ldots,X_n),
\]
and with $k$ the nearest integer to $\beta n$ and $\opnot{T}$ defined in \eqref{eq:partial-bs}.

The following theorem is analogous to the Glivenko--Cantelli theorem. Note that we recover the Glivenko--Cantelli theorem by taking $\beta = 1$.  
Section~\ref{sec:lln} is devoted to the proof.
\begin{theorem}
\label{thm:bubble_sort_lln}
The uniform distance $\sup_{-\infty < x < \infty} \vert \hat{B}^\beta_{n} (x) - B^\beta_0(x) \vert \to 0$ almost surely as $n \to \infty$.
\end{theorem}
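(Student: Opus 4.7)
The plan is to reduce the analysis of the partially sorted sample to that of a much simpler binary sequence. For each $x$, set $Z_i(x) = \ind{X_i > x}$. A direct calculation shows that a single bubble sort pass sends an array $v$ of length $n$ to the array whose $i$-th entry is $\min(M_i, v[i+1])$ for $i \leq n-1$ and $M_n$ for $i = n$, where $M_i = \max(v[1], \ldots, v[i])$. Thresholding at $x$ yields the corresponding transformation on $(Z_1(x), \ldots, Z_n(x))$: provided the pattern contains at least one $1$, the new pattern is obtained by locating the leftmost $1$, deleting it (so the later entries shift one position to the left), and appending a $1$ at position $n$. Crucially, this ``delete-leftmost, append'' operation depends only on the binary pattern, not on the actual values.

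Iterating this binary operation $k$ times gives a closed form for the partially sorted pattern $Z^{(k)}(x)$. Let $G_j(x)$ denote the position of the $j$-th observation exceeding $x$ and $N(x) = \sum_i Z_i(x)$. A short induction shows that, when $N(x) \geq k+1$, the $1$'s of $Z^{(k)}(x)$ sit at positions $G_j(x) - k$ for $j = k+1, \ldots, N$ together with positions $n-k+1, \ldots, n$, while when $N(x) \leq k$ they sit at $n-N(x)+1, \ldots, n$. Because the running maximum of the partially sorted sample is $\leq x$ at position $i$ precisely when $Z^{(k)}(x)$ has only $0$'s in its first $i$ entries, we conclude
\[
n\,\hat{B}_n^\beta(x) =
\begin{cases}
G_{k+1}(x) - k - 1 & \text{if } N(x) \geq k+1,\\
n - N(x) & \text{if } N(x) \leq k.
\end{cases}
\]

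Pointwise almost sure convergence then follows from two invocations of the strong law. Fix $x$ and write $p = 1 - F_0(x)$. First, $N(x)/n \to p$ almost surely. Second, viewing $(Z_i(x))_{i \geq 1}$ as an infinite Bernoulli$(p)$ sequence, the gaps $G_{j+1}(x) - G_j(x)$ are i.i.d.\ Geometric$(p)$, hence $G_j(x)/j \to 1/p$ almost surely. When $p > \beta$, the event $\{N(x) > k\}$ holds for all large $n$, and $\hat{B}_n^\beta(x) = (G_{k+1}(x) - k - 1)/n \to \beta/p - \beta = B_0^\beta(x)$. When $p < \beta$, we instead have $\hat{B}_n^\beta(x) = 1 - N(x)/n \to 1 - p = F_0(x) = B_0^\beta(x)$. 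The borderline $p = \beta$ is harmless, since either branch yields the common value $1 - \beta$.

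Uniform convergence then follows by a standard Glivenko--Cantelli sandwich. Both $\hat{B}_n^\beta$ and $B_0^\beta$ are nondecreasing and take values in $[0,1]$, so for any $\eps > 0$ one can pick a finite grid on which $B_0^\beta$ has consecutive increments of at most $\eps$ (including left-limit points to absorb any jumps of $F_0$) and use pointwise almost sure convergence on this countable grid to conclude $\limsup_n \sup_x |\hat{B}_n^\beta(x) - B_0^\beta(x)| \leq \eps$ almost surely. The main obstacle is the combinatorial identification in the first paragraph: one must verify carefully that the effect of a bubble sort pass on the binary pattern really is ``delete-leftmost, append'' and that iterating this rule meshes cleanly with previously appended $1$'s (they survive because the leftmost $1$ remains strictly left of position $n$ as long as $N(x) \geq k+1$).
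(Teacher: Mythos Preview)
Your proposal is correct and follows essentially the same route as the paper: reduce to the thresholded binary array $Z_i(x)=\ind{X_i>x}$, track the positions of the $1$'s under bubble sort (your ``delete-leftmost, append'' description is exactly the $k=1$ case of the paper's Lemma on binary positions, and its iterate matches their closed form $I_{n,1}^{\beta n}$), apply the SLLN to the geometric gap variables and to $N(x)/n$, and finish with the standard monotone Glivenko--Cantelli sandwich. The only cosmetic difference is that the paper packages the commutation of thresholding with bubble sort as a separate lemma, whereas you derive it from the explicit one-pass formula $w[i]=\min(M_i,v[i+1])$; the content is identical.
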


Now, analogous to the Donsker theorem underpinning the Kolmogorov--Smirnov theorem, we wish to characterize the fluctuations of $\hat{B}_{n}^\beta$ around its limiting curve $B^\beta_0$. We do this in the following theorem which coincides with the Donsker theorem for empirical cumulative distribution functions and Brownian bridges if $\beta = 1$, yet generalizes it with $\beta < 1$. Section~\ref{sec:weak-convergence} is devoted to the proof.

\begin{theorem}
\label{thm:bubble_sort_diffusion}
If $F_0$ is continuous then the process $\sqrt{n} (\hat B_{n}^\beta - B^\beta_0)$ converges weakly to $Y^\beta_0$ in the M1 topology where $Y^\beta_0$ is  right-continuous, zero mean, Gaussian process with covariance function
\begin{align}
\label{def:Y_covariance}
c(x,y) = \cov\big(Y_0^\beta(x),Y_0^\beta(y)\big) =
\begin{cases}
\beta \frac{F_0(x)}{\big(1-F_0(x)\big)^2} & \text{ if } x \leq y < x^{*}_\beta, \\[5pt]
\beta \frac{F_0(x)\big(1-F_0(y)\big)}{\big(1-F_0(x)\big)^2} & \text{ if } x < x^{*}_\beta \leq y, \\[5pt]
F_0(x)\big(1-F_0(y)\big) & \text{ if } x^{*}_\beta \leq x \leq y.
\end{cases}
\end{align}
\end{theorem}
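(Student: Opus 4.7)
The plan is to represent $\sqrt{n}(\hat B_n^\beta - B_0^\beta)$ as a continuous functional of a two-parameter empirical process, invoke a functional central limit theorem for that process, and conclude via a continuous mapping argument in the M1 topology.

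First I would derive a structural identity for the running maxima of the partially sorted sample. A standard invariant of bubble sort is that after $k$ passes the top $k$ order statistics occupy the last $k$ cells in increasing order; a short induction then shows that for $i \le n-k$ the running maximum $\max\{X_1^\beta,\ldots,X_i^\beta\}$ equals the $(k+1)$-th largest element among $X_1,\ldots,X_{i+k}$, whereas for $i > n-k$ it equals the $(n-i+1)$-th largest of the whole sample. Setting $N_j(x) = \sum_{m=1}^j \ind{X_m > x}$ and $T_{k+1}(x) = \inf\{j : N_j(x) = k+1\}$, this identity translates into
\[
n\,\hat B_n^\beta(x) = \begin{cases} T_{k+1}(x) - k - 1, & N_n(x) \ge k+1, \\ n - N_n(x), & N_n(x) \le k, \end{cases}
\]
so that on either side of the breakpoint $x_\beta^{*} = F_0^{-1}(1-\beta)$ a different branch is asymptotically active.

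Next, reducing to $F_0$ uniform on $[0,1]$ via $U_i = F_0(X_i)$, I would invoke the Bickel--Wichura functional CLT for the two-parameter sequential empirical process
\[
Z_n(t,u) = \frac{1}{\sqrt{n}}\sum_{i=1}^{\lfloor nt\rfloor}\big(\ind{U_i > u} - (1-u)\big),
\]
obtaining $Z_n \Rightarrow Z$ in $D([0,1]^2)$, where $Z$ is centred Gaussian with covariance $(s\wedge t)(u\wedge v - uv)$. For $x > x_\beta^{*}$ the identity above gives immediately $\sqrt{n}(\hat B_n^\beta(x) - B_0^\beta(x)) = -Z_n(1, F_0(x))$, which directly yields the Brownian-bridge covariance. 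For $x < x_\beta^{*}$, a Vervaat-type inversion of the monotone counting process $j \mapsto N_j(x)$ yields, locally uniformly,
\[
\sqrt{n}(\hat B_n^\beta(x) - B_0^\beta(x)) = -\frac{1}{1-F_0(x)}\,Z_n\Big(\frac{\beta}{1-F_0(x)},\,F_0(x)\Big) + o_p(1).
\]
Plugging these two expressions into the covariance of $Z$ reproduces precisely the three cases in the theorem, giving fidi convergence and identifying the limit as the claimed Gaussian process $Y_0^\beta$.

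Finally, for weak convergence in M1 I would split the process into its restrictions to $(-\infty, x_\beta^{*})$ and to $[x_\beta^{*}, \infty)$. On each of these closed subintervals the limit has a.s.\ continuous paths (being a Gaussian time change of $Z$), so standard tightness arguments -- second-moment bounds on empirical-process increments on the right piece, and the J1-continuity of the Vervaat inverse applied to the exceedance-time process on the left piece -- deliver J1 tightness on each part. The two pieces are then glued across the jump at $x_\beta^{*}$ via a standard M1 pasting criterion (cf.\ Whitt's stochastic-process limits monograph), whose content is that M1 accommodates a single jump in the limit provided the completed graphs of the approximations converge to that of the limit. The main obstacle is precisely this interface at $x_\beta^{*}$: the two branches swap dominance there, and the limit has an honest jump (a factor of $1/\beta$ between left and right limits), so the technical heart of the proof is a uniform control of the counting/hitting-time duality in a shrinking neighbourhood of $x_\beta^{*}$, ensuring that the M1 oscillation modulus concentrates only at this single jump and vanishes elsewhere.
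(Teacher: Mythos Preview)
Your proposal is correct in outline and arrives at the right limit, but it takes a genuinely different route from the paper, and the comparison is worth spelling out.

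\textbf{Finite-dimensional distributions.} The paper does \emph{not} go through the two-parameter sequential empirical process. Instead it fixes a pair $x_1<x_2$ and builds explicit nested representations: for instance $I_k(x_2)=S_{S_k(x_2)}(x_1)$ and $H_k(x_2)=C_{C_k(x_1)}(x_2)$, where the outer and inner processes are partial sums/counting processes of \emph{independent} geometric sequences with parameters $p_1=1-F(x_1)$ and $p_2=(1-F(x_2))/(1-F(x_1))$. It then invokes the joint FCLT for a partial-sum process and its inverse counting process (Whitt, Th.~7.3.2) and computes the $2\times 2$ limiting covariance separately in each of the three regimes $x^*_\beta<x_1<x_2$, $x_1<x^*_\beta<x_2$, $x_1<x_2<x^*_\beta$. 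Your approach via the Bickel--Wichura field $Z_n(t,u)$ together with a Vervaat inversion handles all three cases simultaneously through the single formula $-\tfrac{1}{1-F(x)}Z_n(\tfrac{\beta}{1-F(x)},F(x))$ on the left and $-Z_n(1,F(x))$ on the right; this is more compact and makes the covariance structure transparent, at the cost of importing a heavier two-parameter FCLT.

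\textbf{Tightness.} Here the difference is larger. The paper does \emph{not} split into two J1-tight pieces and paste; it works directly with the M1 oscillation function $w_\tau$. It first records that the two building blocks $\tilde I_n(x)=\sqrt{n}\,(\tfrac1n I_{1+\beta n}(x)-\beta F(x)/(1-F(x)))$ and $\tilde H_n(x)=\sqrt{n}\,(\tfrac1n H_n(x)-(1-F(x)))$ are each C-tight, and then carries out an explicit four-case analysis (according to whether $\tfrac1n H_n(x_i)\lessgtr\beta$ for $i=1,2,3$) to show that for nearby $x_1<x_2<x_3$ one always has $\tilde B_n(x_1)+o(\eta)\le \tilde B_n(x_2)\le \tilde B_n(x_3)+o(\eta)$ or the reverse inequalities, which is exactly the M1 ``between'' condition. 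This is hands-on but completely explicit. Your plan---J1 tightness on each side plus an M1 pasting lemma across $x^*_\beta$---is morally fine, but two points need care: the Vervaat step must be upgraded to a \emph{uniform} (functional) inversion on $(-\infty,x^*_\beta)$ to get J1 tightness of the left piece, and the ``standard M1 pasting criterion'' you cite from Whitt is not a single off-the-shelf statement; you would still have to verify the M1 modulus in a shrinking neighbourhood of $x^*_\beta$, which in practice amounts to an argument close to the paper's case analysis anyway.
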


Although the process $Y^\beta_0$ might seem unfamiliar at first, it is closely related to generalized Brownian bridges, which we discuss below. For this also consider Figure~\ref{fig:discontinuity_uniform}. The covariance function of $Y^\beta_0$ and the realizations in Figure~\ref{fig:discontinuity_uniform} reveal the presence of a jump at $x^{*}_\beta$. To the left of $x^{*}_\beta$, the process $Y^\beta_0$ behaves like a time-scaled Brownian motion. To the right of $x^{*}_\beta$, the process $Y^\beta_0$ behaves like a time-scaled Brownian bridge. The crucial property of $Y^\beta_0$ is that, conditional on the jump at $x^*_\beta$, on each side of $x^{*}_\beta$ it behaves like an independent generalized Brownian bridge under a time scaling.

We say that a stochastic process $\lbrace W(t) : t \in [0,T] \rbrace$ follows the law of a generalized Brownian bridge from $(0,0)$ to $(T,a)$ and write $W \sim {\rm BB}(T,a)$ if $W$ is a continuous Gaussian process with mean function $\mathbb{E} \big(W(t)\big) = \frac{ta}{T}$ and covariance function $\cov \big(W(s),W(t)\big) = \frac{s(T-t)}{T}$ for $0 \leq s \leq t \leq T$. Note that a standard Brownian motion on $[0,T]$ conditioned on reaching $a$ at time $T$ follows the law of ${\rm BB}(T,a)$. In particular, ${\rm BB}(1,0)$ is the law of a standard Brownian bridge on $[0, 1]$.
An important probability associated with general Brownian bridges is
\begin{equation}
\label{eq:psi-x-t-a-def}
\Psi (x;T,a) = \p \big(\sup_{t\in[0,T]} |W(t)| \leq x\big),
\end{equation}
which can be represented as the the double infinite series expression  \eqref{eq:gen-bb-prob}. See for example \cite[Eq.~(4.12)]{beghin1999maximum} for a derivation. Note that $\Psi(x; 1, 0)$ recovers the Kolmogorov distribution \eqref{eq:kolmogorov-cdf}.

\begin{figure}
\centering
\includegraphics[width=\linewidth]{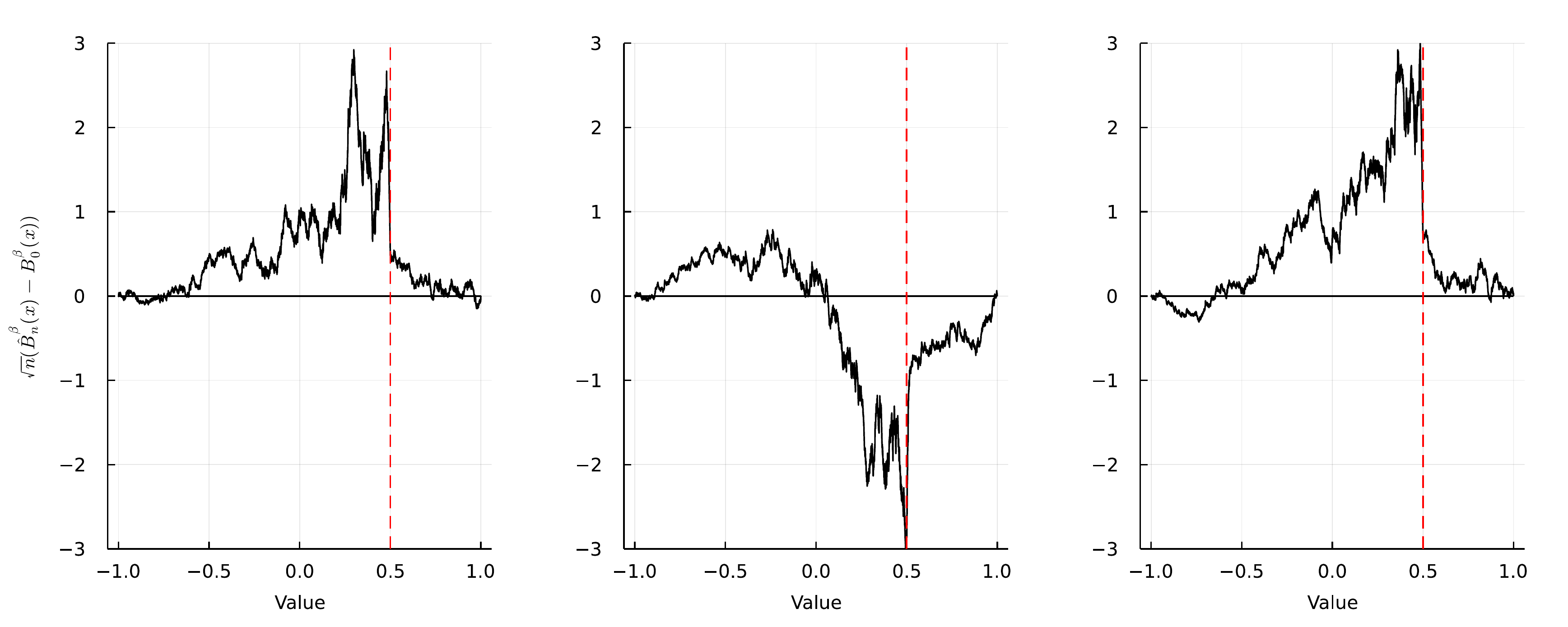}
\caption{\label{fig:discontinuity_uniform}Three random outcomes of $\sqrt{n}(\hat B_n^\beta(x)-B^\beta_0(x))$ with $\beta = 0.25$ and $n=10,000$, where all elements of the initial array are iid, uniformly distributed over $[-1,1]$. The vertical dashed line is at $x^*_\beta =0.5$. 
}
\end{figure}

With generalized Brownian bridges at hand we now construct a {\em generalized Kolmogorov distribution}. For this let $Z$ be a random variable with a standard normal distribution. Conditionally on $Z$, we let the processes $W_1$ and $ W_2$ be independent with laws
\begin{equation}
\label{eq:law-w1-w2}
W_1 \sim {\rm BB}\left(\tfrac{1-\beta}{\beta},\sqrt{\tfrac{1-\beta}{\beta}} Z\right) \quad \text{and} \quad W_2 \sim {\rm BB}\left(\beta,\sqrt{\beta(1-\beta)}Z\right).
\end{equation}

We define the distribution of
\begin{equation}
\label{eq:D-beta-prob-def}
D^\beta = \sup_{t \in [0, (1 - \beta)/\beta]} |W_1(t)| ~~\vee~~ \sup_{t \in [0, \beta]} |W_2(t)|
\end{equation}
as a generalized Kolmogorov distribution depending on the parameter $\beta$. Using this construction we have an expression for the distribution of $D^\beta$ in \eqref{eq:gen-kolg-dist}. The proof is in Section~\ref{sec:analyzing}.

\begin{lemma}
\label{lem:supremum_cdf}
The cdf of $D^\beta$ as defined in \eqref{eq:D-beta-prob-def} is given in \eqref{eq:gen-kolg-dist}.
\end{lemma}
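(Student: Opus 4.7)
The plan is to condition on the standard normal variable $Z$ that drives the endpoints of $W_1$ and $W_2$ in \eqref{eq:law-w1-w2}, then exploit the conditional independence of $W_1$ and $W_2$ together with the definition \eqref{eq:psi-x-t-a-def} of $\Psi$. Since the events $\{\sup_{[0,(1-\beta)/\beta]}|W_1|\le x\}$ and $\{\sup_{[0,\beta]}|W_2|\le x\}$ are conditionally independent given $Z=z$, and each is by definition a $\Psi$-probability of a generalized Brownian bridge with the appropriate pinned endpoint, one obtains
\[
\mathbb{P}(D^\beta \le x \mid Z = z) = \Psi\bigl(x;\tfrac{1-\beta}{\beta},\sqrt{\tfrac{1-\beta}{\beta}}\,z\bigr)\,\Psi\bigl(x;\beta,\sqrt{\beta(1-\beta)}\,z\bigr).
\]
Integrating against the standard normal density then gives the unconditional cdf as an integral of this product over all of $\mathbb{R}$.

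Next I would fold the integral onto $[0,\infty)$ using symmetry in $z$. If $W \sim \mathrm{BB}(T,a)$ then $-W \sim \mathrm{BB}(T,-a)$, so $\sup_{[0,T]}|W|$ has the same law for endpoints $a$ and $-a$, and hence $\Psi(x;T,a)=\Psi(x;T,|a|)$. Both factors in the integrand are therefore even in $z$, which justifies restricting to $z\ge 0$ and multiplying by the factor $2$ appearing in \eqref{eq:gen-kolg-dist}.

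Finally, I would read the upper integration limit directly off the support of $\Psi$. By the second branch of \eqref{eq:gen-bb-prob}, $\Psi(x;T,a)=0$ whenever $x\le|a|$ (since a bridge pinned at $a$ must attain $a$, forcing $\sup|W|\ge|a|$). For $z\ge 0$, the first factor vanishes once $z\ge x\sqrt{\beta/(1-\beta)}$, while the second factor vanishes only once $z\ge x/\sqrt{\beta(1-\beta)}$. Because $\beta\le 1$ gives $\beta\le 1/\beta$ and hence $\sqrt{\beta/(1-\beta)}\le 1/\sqrt{\beta(1-\beta)}$, the first is the binding threshold, so the product is identically zero for $z\ge \sqrt{\beta/(1-\beta)}\,x$. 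Truncating the integration domain at this value yields precisely \eqref{eq:gen-kolg-dist}.

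There is no substantive obstacle beyond this bookkeeping: the argument is essentially a one-line conditional independence computation followed by an integration and a support check. The only point that requires care is identifying which of the two $\Psi$-factors imposes the binding upper limit on the integral; once the inequality $\sqrt{\beta/(1-\beta)}\le 1/\sqrt{\beta(1-\beta)}$ is checked, the stated formula falls out immediately.
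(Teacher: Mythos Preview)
Your proposal is correct and follows essentially the same route as the paper: condition on $Z$, use the conditional independence of $W_1$ and $W_2$ to factor the probability into a product of two $\Psi$-terms, integrate against $\phi(z)$, fold to $z\ge 0$ via $\Psi(x;T,a)=\Psi(x;T,-a)$, and truncate the integral using the support condition in \eqref{eq:gen-bb-prob}. Your explicit identification of which $\Psi$-factor imposes the binding upper limit is slightly more detailed than the paper's one-line remark, but the arguments are otherwise identical.
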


Our final result deals with the asymptotic distribution of the bubble sort statistic \eqref{eq:bubble-sort-stat} and hinges on the process level convergence of Theorem~\ref{thm:bubble_sort_diffusion}. Section~\ref{sec:analyzing} is devoted to the proof.

\begin{theorem}
\label{thm:gKS_convergence}
If $F_0$ is continuous then the bubble sort statistic $\hat{D}^\beta_n$ of \eqref{eq:bubble-sort-stat} converges in distribution to 
$D^\beta$ of \eqref{eq:D-beta-prob-def}.
\end{theorem}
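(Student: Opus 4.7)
The plan is to combine the M1 weak convergence of Theorem~\ref{thm:bubble_sort_diffusion} with the continuous mapping theorem, and then identify the law of $\sup_{x \in \R}|Y^\beta_0(x)|$ with that of $D^\beta$ via an explicit time-changed representation of $Y^\beta_0$ in terms of the processes $W_1,W_2$ from \eqref{eq:law-w1-w2}.

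For the continuous-mapping step, I would invoke that $f \mapsto \sup_x f(x)$ and $f \mapsto \inf_x f(x)$ are continuous on $D[a,b]$ under the M1 topology, and hence so is $f \mapsto \sup_x|f(x)| = \max\bigl(\sup_x f(x),\,-\inf_x f(x)\bigr)$. To pass from a compact interval $[a,b]$ to all of $\R$, I would use that both $\sqrt n\bigl(\hat B_n^\beta - B_0^\beta\bigr)(x)$ and $Y^\beta_0(x)$ have tails that vanish as $|x| \to \infty$: for $Y^\beta_0$ this follows directly from the formula for its variance, and for the empirical process it follows by combining the boundary behaviour of $B_0^\beta$ at $\pm\infty$ with Theorem~\ref{thm:bubble_sort_lln} to control the empirical extrema. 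A standard truncation-and-continuous-mapping argument then delivers $\hat D_n^\beta \Rightarrow \sup_{x \in \R}|Y^\beta_0(x)|$.

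For the identification step, I would introduce the time changes
\[
\tau_L(x) = \frac{\beta F_0(x)}{(1-F_0(x))^2}, \qquad \tau_R(x) = 1-F_0(x),
\]
noting that $\tau_L$ is a strictly increasing bijection from $(-\infty, x^*_\beta)$ onto $[0,(1-\beta)/\beta)$ and $\tau_R$ is a strictly decreasing bijection from $[x^*_\beta,\infty)$ onto $(0,\beta]$. On a common probability space carrying $Z,W_1,W_2$ as in \eqref{eq:law-w1-w2}, I would then claim that the process
\[
\tilde Y(x) = \begin{cases} W_1(\tau_L(x)) & \text{if } x < x^*_\beta, \\ W_2(\tau_R(x)) & \text{if } x \geq x^*_\beta, \end{cases}
\]
satisfies $\tilde Y \stackrel{d}{=} Y^\beta_0$. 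Both sides are zero-mean, right-continuous Gaussian processes, so it suffices to match covariances. By conditioning on $Z$ and using the conditional independence of $W_1$ and $W_2$, one obtains the unconditional identities $\cov(W_1(s),W_1(t)) = s\wedge t$, $\cov(W_2(s),W_2(t)) = (s\wedge t)\bigl(1 - s\vee t\bigr)$, and $\e\bigl(W_1(s)W_2(t)\bigr) = st$. Substituting the time changes and comparing the three regions left--left, left--right, and right--right reproduces \eqref{def:Y_covariance} exactly.

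Since $\tau_L$ and $\tau_R$ are bijections onto their stated ranges and $W_1,W_2$ are continuous,
\[
\sup_{x \in \R}|\tilde Y(x)| = \Bigl(\sup_{t \in [0,(1-\beta)/\beta]}|W_1(t)|\Bigr) \vee \Bigl(\sup_{t \in [0,\beta]}|W_2(t)|\Bigr) = D^\beta
\]
by definition \eqref{eq:D-beta-prob-def}. Combined with the continuous-mapping step, this gives $\hat D_n^\beta \Rightarrow D^\beta$. The hard part will be the continuous-mapping step: because $Y^\beta_0$ has a jump at $x^*_\beta$ and the native index set is all of $\R$, one must verify carefully that $\sup|\cdot|$ is $\p$-a.s.\ continuous at sample paths of $Y^\beta_0$ in M1 and execute the truncation argument. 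The covariance identification, though involving three cases, is a routine calculation once the time changes are in hand.
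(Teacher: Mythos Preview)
Your proposal is correct and follows essentially the same route as the paper: M1 convergence from Theorem~\ref{thm:bubble_sort_diffusion} plus continuous mapping for the supremum, followed by a time-change identification of $Y^\beta_0$ with the pair $(W_1,W_2)$. Two minor differences are worth flagging. First, the paper dispatches what you call ``the hard part'' in a single sentence by citing \cite[Th.~13.4.1]{whitt2002} (weak convergence in M1 implies weak convergence of the supremum), without an explicit truncation argument; your more careful treatment of the passage from $[-\tau,\tau]$ to $\R$ is not unwarranted, but the paper does not regard it as the crux. Second, the paper runs the identification in the opposite direction: it starts from $Y^\beta_0$, applies the time changes (the same ones you write down, after first reducing to uniform margins via $t=F_0(x)$) to \emph{define} $W_1,W_2$, and then computes their \emph{conditional} law given $W_2(\beta)$ to verify \eqref{eq:law-w1-w2}. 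Your reverse direction---build $\tilde Y$ from the constructed $(W_1,W_2)$ and match unconditional covariances---is equivalent and arguably tidier for this theorem alone, since it needs only the three identities $\cov(W_1(s),W_1(t))=s\wedge t$, $\cov(W_2(s),W_2(t))=(s\wedge t)(1-s\vee t)$, $\e\,W_1(s)W_2(t)=st$; the paper's conditional computation, however, is exactly what is reused for Lemma~\ref{lem:supremum_cdf}.
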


We emphasize that the generalized Kolmogorov distribution depends only on the sorting level $\beta$ and is insensitive to the underlying distribution $F_0$. Similarly, the distribution of $\hat{D}^\beta_n$ for any finite $n$ also does not depend on the underlying distribution of $X_i$. As an illustration of the convergence in distribution of Theorem~\ref{thm:gKS_convergence} consider Figure~\ref{fig:stat-conv} focusing on the bubble sort statistic with three sorting levels $\beta$. The black curves in all three plots are identical and are the cdf of $D^\beta$ computed via \eqref{eq:gen-kolg-dist}. In all three cases we consider sample sizes of $n=20$, $n=200$, and $n=2,000$. In each of these cases we simulate $30,000$ repetitions of the samples to obtain the bubble sort statistic \eqref{eq:bubble-sort-stat} and plot its estimated (finite $n$) distribution. It is visually apparent that as $n$ grows, the distributions converge to the generalized Kolmogorov distribution. As is further apparent, for lower sorting levels $\beta$ the convergence is slower.

\begin{figure}
\centering
\includegraphics[width = \linewidth]{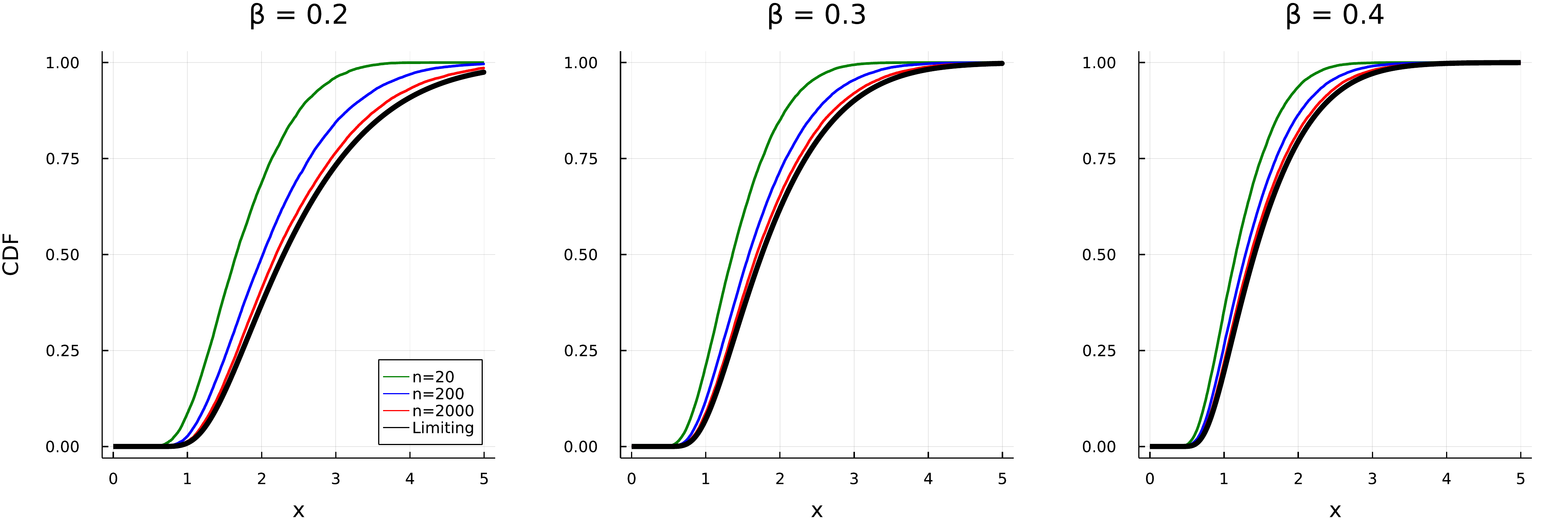}
\caption{\label{fig:stat-conv}Convergence in distribution of the bubble sort statistic $\hat{D}^\beta_n$ to the generalized Kolmogorov distribution $D^\beta$ for various sorting levels. Note that the distributions of $\hat{D}^\beta_n$ are not influenced by the distribution of $X_i$. 
}
\end{figure}

\section{The Law of Large Numbers}
\label{sec:lln}

This section is devoted to the proof of Theorem~\ref{thm:bubble_sort_lln}. More precisely, we consider the random array $V_{n}$ and study the frontier $\hat{B}_{n}^\beta$ associated with the partially sorted array $V_{n}^{\beta n}$. Our goal is to show that the frontier $\hat{B}_{n}^\beta$ converges uniformly to its limiting bubble sort curve $B_0^\beta$ defined in \eqref{eq:bubble_sort_curve}. In the remainder of this section we omit the superscript $\beta$ and the subscript $0$, and with this notation our goal is to show that $\hat{B}_n$ converges uniformly to $B$.

Before we prove this result, we explore how bubble sort operates on binary arrays, which contain only 0's and 1's. The next lemma explains how bubble sort changes the positions of the 1's. We demonstrate later that convergence properties of general random arrays are connected to convergence properties of related binary arrays.

\begin{lemma}
\label{lem:binary_positions}
Consider a binary array $U_{n}$. Assume that exactly $m$ of its entries equal $1$ and let $I_{n, 1} < \dotsc < I_{n, m}$ be the positions of the 1's in $U_{n}$. Denote by $I_{n, 1}^{k} < \dotsc < I_{n, m}^{k}$ the positions of the 1's in the partially sorted array $U_{n}^{k}$. Then
\begin{align*}
I_{n, i}^{k} &=
\begin{cases}
I_{n, i + k} - k & \text{ if } \quad k \leq m - i,\\
n - (m - i) & \text{ if } \quad k > m - i,
\end{cases}
\qquad
\text{for} \qquad i=1,\ldots,m.
\end{align*}
\end{lemma}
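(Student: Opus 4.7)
The plan is to prove Lemma~\ref{lem:binary_positions} in two layers: first establish the single-pass case $k=1$ directly, then iterate it by induction on $k$. For $k=1$, the claim reduces to $I^1_{n,i} = I_{n,i+1}-1$ for $i < m$ and $I^1_{n,m} = n$. Intuitively, one bubble sort pass carries the leftmost $1$ rightward, swapping it past each $0$ it meets, until it abuts the next $1$, at which point the ``bubble'' is transferred to that next $1$, and so on; the final, rightmost $1$ ends up carried all the way to position $n$.

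To make this rigorous, I would prove by induction on the intra-pass step index $j \in \{0, \dotsc, n-1\}$ the following invariant about the array $\opnot{S}_j \circ \dotsb \circ \opnot{S}_1(U_n)$. Let $r^{(j)}$ denote the largest index with $I_{n,r^{(j)}} \le j+1$, taking $r^{(j)}=0$ if no such index exists, and adopt the convention $I_{n,m+1}=n+1$. The invariant asserts that the $1$'s lie at positions $I_{n,i+1}-1$ for $1 \le i < r^{(j)}$ (already-displaced $1$'s), at $\min(j+1,\, I_{n,r^{(j)}+1}-1)$ for $i = r^{(j)}$ (the currently travelling bubble), and at $I_{n,i}$ for $i > r^{(j)}$ (untouched $1$'s). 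The inductive step at $\opnot{S}_j$ hinges on the pair $(v[j],v[j+1])$ before the swap: if $v[j]=0$ then $\opnot{S}_j$ acts as the identity; if $(v[j],v[j+1])=(1,0)$ the bubble advances by one position; and if $(v[j],v[j+1])=(1,1)$ no swap occurs but $r^{(j)}$ increments, so the bubble transfers to the $1$ at position $j+1$. Setting $j=n-1$ then recovers the one-pass formula.

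With the single-pass formula in hand, the general statement follows by induction on $k$. Applying the one-pass formula to the positions $I^{k-1}_{n,1} < \dotsc < I^{k-1}_{n,m}$ yields $I^k_{n,i} = I^{k-1}_{n,i+1}-1$ for $i<m$ and $I^k_{n,m}=n$. Substituting the inductive hypothesis for $I^{k-1}_{n,i+1}$ then separates into two sub-cases: if $i+k \le m$, then $I^{k-1}_{n,i+1}=I_{n,i+k}-(k-1)$, hence $I^k_{n,i}=I_{n,i+k}-k$; if $i+k > m$, then $I^{k-1}_{n,i+1}=n-(m-i-1)$, hence $I^k_{n,i}=n-(m-i)$. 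The edge case $i=m$ is covered directly by $I^k_{n,m}=n=n-(m-m)$, consistent with $k > m-i=0$. The main obstacle is the formulation and verification of the intra-pass invariant: the simultaneous bookkeeping of already-displaced $1$'s, the current bubble, and untouched $1$'s is somewhat delicate, and one must be careful with the boundary convention $I_{n,m+1}=n+1$ so that the final $1$ is handled uniformly. Once the invariant is stated, the step-level case analysis reduces to the clean three-way split above, and the induction on $k$ is essentially an arithmetic exercise.
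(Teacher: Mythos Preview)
Your proposal is correct and follows essentially the same two-layer strategy as the paper: establish the one-pass formula $I^{1}_{n,i}=I_{n,i+1}-1$ for $i<m$ and $I^{1}_{n,m}=n$, then induct on $k$. The induction on $k$ is carried out identically in both.

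The only noteworthy difference is in how the $k=1$ case is argued. The paper tracks ``the 1 at position $I_{i}$'' informally through the pass, arguing in two cases (the next entry is a $1$ or a $0$) that it lands at $I_{i+1}-1$; this is intuitively clear but glosses over the fact that earlier swap operators have already modified the array by the time position $I_{i}$ is reached. Your intra-pass invariant, indexed by the swap step $j$ and distinguishing already-displaced $1$'s, the travelling bubble, and untouched $1$'s, is a genuine formalization of the same picture and removes that ambiguity. It costs a bit of bookkeeping (the definition of $r^{(j)}$ and the convention $I_{n,m+1}=n+1$), but what it buys is a fully rigorous base case that does not rely on an implicit identification of ``which $1$'' is being moved when two $1$'s meet.
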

\begin{proof}
We start by focusing on the 1 at position $I_{i}$ with $i < m$ in the array $U_{n}$ during the first bubble sort iteration. Clearly, the 1 at $I_{i}$ does not get swapped with any values to the left of position $I_{i}$, because all values in $U_{n}$ are at most 1. We now distinguish two cases. If the value to the right of $I_{i}$ equals 1, then $I_{i + 1} = I_{i} + 1$. In this case, the 1 at position $I_{i}$ does not get swapped with any value, so $I_{i}^{1} = I_{i} = I_{i + 1} - 1$. If the value to the right of $I_{i}$ does not equal 1, then $I_{i + 1} > I_{i} + 1$. In this case, the 1 at position $I_{i}$ gets swapped with all 0's between position $I_{i}$ and $I_{i + 1}$, so it reaches the position on the left of $I_{i + 1}$. However, it does not swap with the 1 at position $I_{i + 1}$, so $I_{i}^{1} = I_{i + 1} - 1$ in this case too. Therefore, the statement of the lemma holds for $k = 1$. This may be extended to all $k \leq m - i$ by induction.

We now turn to the case $k > m - i$. The previous arguments also show that one bubble sort iteration moves every 1 to the position of the last 0 in the sequence of 0's immediately right to it. That means that $I_{n, m}^{k} = n$ for $k > 0$ and $I_{n, i}^{k} = n - (m - i)$ for $k > m - i$ by induction.
\end{proof}
Lemma~\ref{lem:binary_positions} enables us to relate the position $I_{1}$ of the leftmost 1 in $U_{n}$ to the position $I_{1}^{k}$ in $U_{n}^{k}$, which  we obtain after applying $k$ bubble sort iterations to $U_{n}$. This is important, because understanding the dynamics of the leftmost 1 in $U_{n}$ is crucial for understanding the behavior of its frontier.

As mentioned before, our goal is to connect the convergence of a general random array to convergence of a related binary random array. More concretely, given the random array $V_{n}$, we fix $x \in \mathbb{R}$ such that $0 < F(x) < 1$ and define the new binary array $U_{n}$ via
\begin{align*}
U_{n}[i] = 
\begin{cases}
0 & \text{ if } \quad V_{n}[i] \leq x,\\
1 & \text{ if } \quad V_{n}[i] > x.
\end{cases}
\end{align*}
The idea is that the position of the leftmost value in $V_{n}^{k}$ exceeding $x$ has the same position as the leftmost $1$ in $U_{n}^{k}$, regardless of the number $k$ of bubble sort iterations that we apply to both arrays. We formalize this in the next lemma and use it in combination with Lemma~\ref{lem:binary_positions} to establish the convergence of $\hat{B}_{n}$ to $B$.

\begin{lemma}
\label{lem:identical_positions}
For each $k = 0, 1, 2, \ldots,n$, it holds that
\begin{align*}
U_{n}^{k}[i] = 
\begin{cases}
0 & \text{ if } \quad V_{n}^{k}[i] \leq x,\\
1 & \text{ if } \quad V_{n}^{k}[i] > x,
\end{cases}
\end{align*}
so in particular
\begin{align*}
\inf \lbrace i = 1, \dotsc, n ~ \vert ~ V_{n}^{k}[i] > x \rbrace
&= \inf \lbrace i = 1, \dotsc, n ~ \vert ~ U_{n}^{k}[i] = 1 \rbrace.
\end{align*}
\end{lemma}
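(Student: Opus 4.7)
The plan is to prove the lemma by induction on the number $k$ of bubble sort iterations. The base case $k = 0$ is immediate from the definition of $U_n$. For the inductive step, since $V_n^{k+1} = \opnot{T}(V_n^k) = \opnot{S}_{n-1} \circ \cdots \circ \opnot{S}_1 (V_n^k)$ and likewise for $U_n^{k+1}$, it suffices to establish the following ``thresholding invariant'' for a single adjacent swap: if $U$ is obtained from $V$ by thresholding at $x$ (that is, $U[j] = \ind{V[j] > x}$), then $\opnot{S}_i(U)$ is obtained from $\opnot{S}_i(V)$ by thresholding at $x$. Applying this invariant through the composition defining $\opnot{T}$, and then iterating, will yield the lemma.

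The core of the argument is therefore a case analysis for $\opnot{S}_i$, which only inspects positions $i$ and $i+1$. I would split on the four possibilities for the pair $(V[i], V[i+1])$ relative to $x$. When both values are $\leq x$ or both are $> x$, the thresholded array has equal entries at $i$ and $i+1$, so $\opnot{S}_i(U) = U$; on the other side, any swap that $\opnot{S}_i$ performs in $V$ keeps both values on the same side of $x$, so the threshold is unchanged. When $V[i] \leq x < V[i+1]$, no swap occurs in $V$ since $V[i] < V[i+1]$, and no swap occurs in $U$ since $U[i] = 0 < 1 = U[i+1]$. Finally, when $V[i] > x \geq V[i+1]$, swaps occur in both arrays: in $V$ because $V[i] > V[i+1]$, and in $U$ because $U[i] = 1 > 0 = U[i+1]$; after the swap, positions $i$ and $i+1$ in $V$ have exchanged sides of $x$, matching the exchange of $1$ and $0$ in $U$.

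The second conclusion follows immediately from the first, because by the established correspondence the smallest index at which $V_n^k$ exceeds $x$ coincides with the smallest index at which $U_n^k$ equals $1$. I do not anticipate any genuine obstacle: the entire argument is bookkeeping on the four-case comparison above, made straightforward by the fact that bubble sort involves only adjacent comparisons and that order comparisons are invariant under thresholding precisely when the two compared entries lie on opposite sides of $x$, while same-side swaps are invisible to the threshold.
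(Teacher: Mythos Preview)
Your proposal is correct and follows essentially the same approach as the paper: both argue by induction on $k$, reduce the inductive step to showing that a single swap operator $\opnot{S}_i$ commutes with thresholding at $x$, and verify this by inspecting positions $i$ and $i+1$. The only cosmetic difference is that the paper short-circuits the four-case analysis by noting that the only potentially problematic case is $U[i]=1$, $U[i+1]=0$, whereas you spell out all four cases explicitly.
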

\begin{proof}
The statement of the lemma is trivially true if $V_{n}$ does not contain any value exceeding $x$ or if $k = 0$, so we assume that this is not the case. We only show that the statement holds for $k = 1$, since the case $k > 1$ follows by induction.

To establish the claim for $k = 1$, it suffices to show that applying an arbitrary swap operator $\opnot{S}_{j}$ to both $V_{n}$ and $U_{n}$ yields $\opnot{S}_{j} (V_{n})[i] > x$ if and only if $\opnot{S}_{j} (U_{n})[i] = 1$. Since by construction $V_{n}[i] > x$ if and only if $U_{n}[i] = 1$, this can only be possibly violated if $U_{n}[i] = 1$, $U_{n}[i+1] = 0$ and $j = i$. However, this means that $V_{n}[i] > x$ and $V_{n}[i+1] \leq x$, so $\opnot{S}_{i} (V_{n})[i] = V_{n}[i+1] \leq x$ and $\opnot{S}_{i} (V_{n})[i+1] = V_{n}[i] > x$ with $\opnot{S}_{i} (U_{n})[i] = U_{n}[i+1] = 0$ and $\opnot{S}_{i} (U_{n})[i+1] = U_{n}[i] = 1$ in this case. This proves the claim.
\end{proof}

We can use the previous two results to establish the convergence of $\hat{B}_{n} (x)$ to $B(x)$ as follows. We recall that $\beta$ is the sorting level and that the entries of $V_{n}$ consist of the first $n$ elements of the sequence of iid random variables $X_{1}, X_{2}, \dotsc$. Thus the entries of $U_{n}$ consist of the first $n$ elements of the sequence of iid random variables $\ind{X_{1} > x}, \ind{X_{2} > x}, \dotsc$. The latter each follows a Bernoulli distribution with success probability $p = 1 - F(x) \in (0, 1)$.

Since $\ind{X_{1} > x}, \ind{X_{2} > x}, \dotsc$ are iid, the index of the first 1 in that sequence has a geometric distribution with success probability $p$. The same is true for the distance between two consecutive 1's in that sequence. Thus the positions of the 1's in the sequence $\ind{X_{1} > x}, \ind{X_{2} > x}, \dotsc$ give rise to a sequence of iid random variables $G_{1}, G_{2}, \dotsc$ that have a geometric distribution with parameter $p$.

The total number of 1's in the binary array $U_{n}$ is given by
\begin{align*}
H_{n} = \sum_{i = 1}^{n} \ind{X_{i} > x}
\end{align*}
and has a binomial distribution with parameter $p$. In line with the notation used in Lemma~\ref{lem:binary_positions}, we also define
\begin{align*}
I_{k} &= \sum_{i = 1}^{k} G_{i}
\end{align*}
for $k = 1, 2, \dotsc$. The random variable $I_{k}$ simply denotes the position of the $k$-th 1 in the sequence $\ind{X_{1} > x}, \ind{X_{2} > x}, \dotsc$. This should be distinguished from the random variable $I_{n, k}$, which denotes the position of the $k$-th 1 in $U_{n}$.

Applying the results of Lemma~\ref{lem:binary_positions} and Lemma~\ref{lem:identical_positions} combined with our definition of $I_{k}$, we see that the position $I_{n, 1}^{\beta n}$ of the first 1 in the partially sorted array $U_{n}^{\beta n}$ satisfies
\begin{align*}
I_{n, 1}^{\beta n} = 
\begin{cases}
I_{1 + \beta n} - \beta n & \text{ if }\quad \beta n \leq H_{n} - 1,\\
n - H_{n} + 1 & \text{ if }\quad \beta n > H_{n} - 1,\\
\end{cases}
\end{align*}
so
\begin{align*}
I_{n, 1}^{\beta n} = \min \lbrace I_{1 + \beta n}, n \rbrace - \min \lbrace \beta n, H_{n} - 1 \rbrace.
\end{align*}
Indeed, if $I_{1 + \beta n} \leq n$, then there are at least $1 + \beta n$ entries of $U_{n}$ that contain a 1. Then the number of bubble sort iterations $\beta n$ that we apply to $U_{n}$ is smaller than the total number of 1's in $U_{n}$, so Lemma~\ref{lem:binary_positions} implies that $I_{n, 1}^{\beta n} = I_{1 + \beta n} - \beta n$ in this case. If $I_{1 + \beta n} > n$, then $U_{n}$ has at most $\beta n$ entries that contain a 1, so $H_{n} \leq \beta n$ and $I_{n, 1}^{\beta n} = n - (H_{n} - 1)$ by Lemma~\ref{lem:binary_positions} in this case.

We now combine these results to establish the convergence of $\hat{B}_{n} (x)$ to $B(x)$. As defined in \eqref{eq:emp-bubble-sort-curve}, the frontier $\hat{B}_{n}$ is given as the empirical cdf of the running maximum 
of $V_{n}^{\beta n}$. Thus $\hat{B}_{n}(x)$ is equal to the relative number of entries in the running maximum 
that do not exceed $x$. Clearly, the first value that does exceed $x$ in the running maximum 
has the same position as the first value exceeding $x$ in $V_{n}^{\beta n}$. Additionally, due to Lemma~\ref{lem:identical_positions}, the first value exceeding $x$ in $V_{n}^{\beta n}$ has the same position as the first 1 in $U_{n}^{\beta n}$. Then $I_{n, 1}^{\beta n}$ is also the position of first value exceeding $x$ in the running maximum.  
Consequently, $\hat{B}_{n} (x)$ satisfies 
\begin{align}
\label{eq:Bnx}
\hat{B}_{n} (x) = \frac{1}{n} (I_{1, n}^{\beta n} - 1) = \min \Big\lbrace \frac{1}{n} I_{1 + \beta n}, 1 \Big\rbrace - \min \Big\lbrace \frac{1 + \beta n}{n}, \frac{1}{n} H_{n} \Big\rbrace.
\end{align}
Since $\frac{1}{n} I_{1 + \beta n} \to \beta / p$ and $\frac{1}{n} H_{n} \to p$ almost surely, the continuous-mapping theorem implies that
\begin{align*}
\hat{B}_{n} (x) \to \min \lbrace \beta / p, 1 \rbrace - \min \lbrace \beta, p \rbrace
\end{align*}
almost surely. Taking into account that $\beta < p$ if and only if $\beta / p < 1$ and that $p = 1 - F(x)$, we conclude that $\hat{B}_{n}(x)$ converges to $B(x)$ almost surely. This can be extended to almost sure uniform convergence of $\hat{B}_{n}$ to $B$ by the standard Glivenko--Cantelli arguments (cf.\ \cite[Th.~19.1]{vandervaart1998}).

\section{Weak Convergence}
\label{sec:weak-convergence}

This section is devoted to the proof of Theorem~\ref{thm:bubble_sort_diffusion}. The proof has a classical structure: we first establish weak convergence of the finite-dimensional distributions (fdds) of the scaled and centered frontier $\tilde{B}_{n} = \sqrt{n} (\hat{B}_{n} - B)$ on a dense subset of the real line. It turns out that the fdds of $\tilde{B}_{n}$ converge to the fdds of a process that behaves like a time-scaled Brownian motion on $(-\infty, x^{*}_{\beta})$ and like a time-scaled Brownian bridge on $(x^{*}_{\beta}, \infty)$, with a jump occuring at $x^{*}_{\beta}$. We then proceed to show that $\tilde{B}_{n}$ is tight in the M1-topology. These two steps combined imply the weak convergence result of Theorem~\ref{thm:bubble_sort_diffusion}.

\subsection{The finite-dimensional distributions}

We start by establishing weak convergence of the fdds of $\tilde{B}_{n}$. For ease of exposition, we restrict ourselves to weak convergence of two-dimensional distributions of the form $(\tilde{B}_{n} (x_{1}), \tilde{B}_{n} (x_{2}))$, where $x_{1} < x_{2}$ and $x_{1}, x_{2} \not= x^{*}_{\beta}$. We separately study three different cases: Case I with $x^{*}_{\beta} < x_{1} < x_{2}$, Case II with $x_{1} < x^{*}_{\beta} < x_{2}$, and Case III with $x_{1} < x_{2} < x^{*}_{\beta}$.

\subsubsection{Notation}
We introduce some new notation to prove the desired convergence results. 
In line with the notation of the previous section, we define $I_{k} (x)$ as the position of the $k$-th value exceeding $x$ in the underlying sequence of random variables $X_{1}, X_{2}, \dotsc$. We also define $H_{k} (x) = \sum_{i = 1}^{k} \ind{X_{i} > x}$ as the total number of values that exceed $x$ in $X_{1}, \dotsc, X_{k}$.

We let $G_{1} (x_{1}), G_{2} (x_{1}), \dotsc$ denote a collection of independent random variables that follow a geometric distribution with parameter $p_{1} = 1 - F(x_{1})$. The variable $G_{1} (x_{1})$ is the position of the first value exceeding $x_{1}$ in the sequence $X_{1}, X_{2}, \dotsc$, while $G_{1} (x_{1}) + G_{2} (x_{1})$ is the position of the second value exceeding $x_{1}$ in that sequence and so on.

Each value in $X_{1}, X_{2}, \dotsc$ that exceeds $x_{1}$ also exceeds $x_{2}$ with probability $p_{2} = (1 - F(x_{2})) / (1 - F(x_{1}))$. This gives rise to another collection $G_{1} (x_{2}), G_{2} (x_{2}), \dotsc$ of independent random variables having a geometric distribution with parameter $p_{2}$. Thus $G_{1} (x_{2})$ is the position of the first value exceeding $x_{2}$ in the sequence of values exceeding $x_{1}$, while $G_{1} (x_{2}) + G_{2} (x_{2})$ is the position of the second value exceeding $x_{2}$ in the sequence of values exceeding $x_{1}$ and so on.

Given these definitions, we let $S_{k} (x) = \sum_{i = 1}^{k} G_{i} (x)$. With this notation, the position of the $k$-th value exceeding $x_{1}$ in $X_{1}, X_{2}, \dotsc$ is given by
\begin{align}
I_{k} (x_{1}) = \sum_{i = 1}^{k} G_{i} (x_{1}) = S_{k} (x_{1})
\end{align}
and the position of the $k$-th value exceeding $x_{2}$ in $X_{1}, X_{2}, \dotsc$ is given by
\begin{align}
I_{k} (x_{2}) = \sum_{i = 1}^{\sum_{j = 1}^{k} G_{j} (x_{2})} G_{i} (x_{1}) = S_{S_{k} (x_{2})} (x_{1}).
\end{align}

The partial sum $S_{k} (x)$ gives rise to an associated counting process $C_{t} (x)$ defined via $C_{t} (x) = \max \lbrace k \geq 0 \, \vert \, S_{k} (x) \leq t \rbrace$, where we take $t \in [0, \infty)$ and interpret $S_{0} (x) = 0$. The random variable $C_{k} (x_{1})$ simply counts the number of values exceeding $x_{1}$ in $X_{1}, \dotsc, X_{k}$, while $C_{k} (x_{2})$ counts how many of the first $k$ values that exceed $x_{1}$ also exceed $x_{2}$. Therefore we may represent
\begin{align}
H_{k} (x_{1}) = C_{k} (x_{1})
\end{align}
and
\begin{align}
H_{k} (x_{2}) = C_{C_{k} (x_{1})} (x_{2}).
\end{align}
Thus we can represent each of the variables $I_{n} (x_{1})$, $I_{n} (x_{2})$, $H_{n} (x_{1})$, and $H_{n} (x_{2})$ in terms of nested partial sums or counting processes. The convergence properties of these nested versions of partial sums and their associated counting processes form the crux of the proof.

\subsubsection{Proof of weak convergence}
We are now in a position to establish the desired weak convergence results. We start with convergence of the counting processes and their associated partial sums, which we exploit to prove weak convergence of $(\tilde{B}_{n} (x_{1}), \tilde{B}_{n} (x_{2}))$.

With $t \in [0, \infty)$ and $[t]$ being equal to $t$ rounded down to the nearest integer, we may regard $C_{t} (x_{i})$ and $S_{[t]} (x_{i})$ as right-continuous stochastic processes on $[0, \infty)$. In view of \cite[Th.~7.3.2]{whitt2002}, their scaled and centered versions
\begin{align*}
\frac{1}{\sqrt{n}} \big( C_{nt} (x_{i}) - p_{i} nt \big)
\quad \text{ and } \quad
\frac{1}{\sqrt{n}} \big( S_{[nt]} (x_{i}) - \frac{1}{p_{i}} nt \big)
\end{align*}
converge jointly to the processes
\begin{align*}
-\sqrt{1 - p_{i}} W_{p_{i} t}^{(i)}
\quad \text{ and } \quad
\frac{\sqrt{1 - p_{i}}}{p_{i}} W_{t}^{(i)},
\end{align*}
where $W_{t}^{(i)}$ is a standard Brownian motion with $W_{t}^{(1)}$ and $W_{t}^{(2)}$ independent. Combined with the new representations of $I_{n} (x_{1})$, $I_{n} (x_{2})$, $H_{n} (x_{1})$, and $H_{n} (x_{2})$ above, we can use this result to derive weak convergence of $(\tilde{B}_{n} (x_{1}), \tilde{B}_{n} (x_{2}))$ for all three cases.

\emph{Case I.} If $x^{*}_{\beta} < x_{1} < x_{2}$, then \eqref{eq:Bnx} implies that $(\tilde{B}_{n} (x_{1}), \tilde{B}_{n} (x_{2}))$ is asymptotically equivalent to
\begin{align*}
-\bigg( \sqrt{n} \bigg( \frac{1}{n} H_{n} (x_{1}) - (1 - F(x_{1}) \bigg), \sqrt{n} \bigg( \frac{1}{n} H_{n} (x_{2}) - (1 - F(x_{2})) \bigg) \bigg).
\end{align*}
The first element of this random vector equals $\frac{1}{\sqrt{n}} (C_{n} (x_{1}) - p_{1}n)$, while the second equals $\frac{1}{\sqrt{n}} \big( C_{C_{n} (x_{1})} (x_{2}) - p_{1} p_{2} \big)$. We can rewrite the last expression as
\begin{align}
\label{eq:hatB_tsx2}
\frac{1}{\sqrt{n}} \big( C_{n \bar{C}_{n} (x_{1})} (x_{2}) - p_{2} n \bar{C}_{n} (x_{1}) \big) + p_{2} \frac{1}{\sqrt{n}} \big( C_{n} (x_{1}) - p_{1}n \big),
\end{align}
where we define $\bar{C}_{n} (x_{1}) = \frac{1}{n} C_{n} (x_{1})$. Based on these considerations, we conclude that $(\tilde{B}_{n} (x_{1}), \tilde{B}_{n} (x_{2}))$ is asymptotically equivalent to
\begin{align*}
-\bigg(
\frac{1}{\sqrt{n}} (C_{n} (x_{1}) - p_{1}n),
p_{2} \frac{1}{\sqrt{n}} \big( C_{n} (x_{1}) - p_{1}n \big) + \frac{1}{\sqrt{n}} \big( C_{n \bar{C}_{n} (x_{1})} (x_{2}) - p_{2} n \bar{C}_{n} (x_{1}) \big)
\bigg).
\end{align*}
Given the convergence of counting processes and their partial sums as well as the convergence in probability of $\bar{C}_{n} (x_{1})$ to $p_{1}$, we see that this vector converges weakly to
\begingroup
\renewcommand*{\arraystretch}{1.75}
\setlength\arraycolsep{4pt}
\begin{align*}
\begin{bmatrix}
1 & 0 \\
p_{2} & 1
\end{bmatrix}
\begin{bmatrix}
-\sqrt{1 - p_{1}} W_{p_{1}}^{(1)} \\
-\sqrt{1 - p_{2}} W_{p_{2} p_{1}}^{(2)}
\end{bmatrix}
=
-
\begin{bmatrix}
\sqrt{1 - p_{1}}  & 0 \\
\sqrt{1 - p_{1}} p_{2} & \sqrt{1 - p_{2}}
\end{bmatrix}
\begin{bmatrix}
W_{p_{1}}^{(1)} \\
W_{p_{1} p_{2}}^{(2)}
\end{bmatrix}
.
\end{align*}
\endgroup
Since $W^{(1)}$ and $W^{(2)}$ are independent standard Brownian motions, this implies that $(\tilde{B}_{n} (x_{1}), \tilde{B}_{n} (x_{2}))$ converges weakly to a zero-mean normal distribution with covariance matrix
\begingroup
\renewcommand*{\arraystretch}{1.75}
\setlength\arraycolsep{4pt}
\begin{align*}
\begin{bmatrix}
\sqrt{1 - p_{1}}  & 0 \\
\sqrt{1 - p_{1}} p_{2} & \sqrt{1 - p_{2}}
\end{bmatrix}
\begin{bmatrix}
p_{1} & 0 \\
0 & p_{1} p_{2}
\end{bmatrix}
\begin{bmatrix}
\sqrt{1 - p_{1}}  & \sqrt{1 - p_{1}} p_{2} \\
0 & \sqrt{1 - p_{2}}
\end{bmatrix}
.
\end{align*}
\endgroup
Bearing in mind that $p_{1} = 1 - F(x_{1})$ and $p_{1} p_{2} = 1 - F(x_{2})$, some straightforward computations reveal that the last matrix equals
\begingroup
\renewcommand*{\arraystretch}{1.75}
\setlength\arraycolsep{4pt}
\begin{align*}
\begin{bmatrix}
p_{1} (1 - p_{1}) & p_{1} (1 - p_{1}) p_{2} \\
p_{1} (1 - p_{1}) p_{2} & p_{1} p_{2} (1 - p_{1} p_{2})
\end{bmatrix}
=
\begin{bmatrix}
F(x_{1}) (1 - F(x_{1})) & F(x_{1}) (1 - F(x_{2})) \\
F(x_{1}) (1 - F(x_{2})) & F(x_{2}) (1 - F(x_{2}))
\end{bmatrix}
.
\end{align*}
\endgroup
The entries of the last matrix are indeed characterized by the function $c$ specified in \eqref{def:Y_covariance}. Thus $(\tilde{B}_{n} (x_{1}), \tilde{B}_{n} (x_{2}))$ converges weakly to a zero-mean normal distribution whose covariance matrix is specified by \eqref{def:Y_covariance}. Not surprisingly, this is the limiting distribution that one obtains in a Kolmogorov--Smirnov setting with full sorting.

\emph{Case II.} If $x_{1} < x^{*}_{\beta} < x_{2}$, then \eqref{eq:Bnx} implies that $(\tilde{B}_{n} (x_{1}), \tilde{B}_{n} (x_{2}))$ is asymptotically equivalent to
\begin{align*}
\bigg( \sqrt{n} \bigg( \frac{1}{n} I_{\beta n} (x_{1}) - \beta - \beta \frac{F(x_{1})}{1 - F(x_{1})} \bigg), - \sqrt{n} \bigg( \frac{1}{n} H_{n} (x_{2}) - (1 - F(x_{2})) \bigg) \bigg).
\end{align*}
The first element of this random vector equals $\frac{1}{\sqrt{n}} (S_{[n \beta]} (x_{1}) - \frac{1}{p_{1}} n \beta)$, while the second element can be rewritten as \eqref{eq:hatB_tsx2}. Thus $(\tilde{B}_{n} (x_{1}), \tilde{B}_{n} (x_{2}))$ is asymptotically equivalent to
\begin{align*}
\bigg(
\frac{1}{\sqrt{n}} (S_{[n \beta]} (x_{1}) - \frac{1}{p_{1}} n \beta),
- p_{2} \frac{1}{\sqrt{n}} \big( C_{n} (x_{1}) - p_{1}n \big) + \frac{1}{\sqrt{n}} \big( C_{n \bar{C}_{n} (x_{1})} (x_{2}) - p_{2} n \bar{C}_{n} (x_{1}) \big)
\bigg).
\end{align*}
Again relying on the convergence of counting processes and their partial sums as well as the convergence in probability of $\bar{C}_{n} (x_{1})$ to $p_{1}$, we see that this vector converges weakly to
\begingroup
\renewcommand*{\arraystretch}{1.75}
\setlength\arraycolsep{4pt}
\begin{align*}
\begin{bmatrix}
1 & 0 & 0 \\
0 & -p_{2} & 1
\end{bmatrix}
\begin{bmatrix}
\frac{\sqrt{1 - p_{1}}}{p_{1}} W_{\beta}^{(1)} \\
-\sqrt{1 - p_{1}} W_{p_{1}}^{(1)} \\
-\sqrt{1 - p_{2}} W_{p_{2} p_{1}}^{(2)}
\end{bmatrix}
=
\begin{bmatrix}
\sqrt{1 - p_{1}}{p_{1}} & 0 & 0 \\
0 & \sqrt{1 - p_{1}} p_{2} & - \sqrt{1 - p_{2}}
\end{bmatrix}
\begin{bmatrix}
W_{\beta}^{(1)} \\
W_{p_{1}}^{(1)} \\
W_{p_{2} p_{1}}^{(2)}
\end{bmatrix}
.
\end{align*}
\endgroup
Since $W^{(1)}$ and $W^{(2)}$ are independent standard Brownian motions and $\beta < p_{1}$, this implies that $(\tilde{B}_{n} (x_{1}), \tilde{B}_{n} (x_{2}))$ converges weakly to a zero-mean normal distribution with covariance matrix
\begin{align*}
\begin{bmatrix}
\sqrt{1 - p_{1}}{p_{1}} & 0 & 0 \\
0 & \sqrt{1 - p_{1}} p_{2} & - \sqrt{1 - p_{2}}
\end{bmatrix}
\begin{bmatrix}
\beta & \beta & 0 \\
\beta & p_{1} & 0 \\
0 & 0 & p_{1} p_{2}
\end{bmatrix}
\begin{bmatrix}
\sqrt{1 - p_{1}}{p_{1}} & 0 \\
0 & \sqrt{1 - p_{1}} p_{2} \\
0 & - \sqrt{1 - p_{2}}
\end{bmatrix}
,
\end{align*}
which equals
\begingroup
\renewcommand*{\arraystretch}{2.0}
\setlength\arraycolsep{5pt}
\begin{align*}
\begin{bmatrix}
\beta \frac{1 - p_{1}}{p_{1}^{2}} & \beta \frac{1 - p_{1}}{p_{1}} p_{2} \\
\beta \frac{1 - p_{1}}{p_{1}} p_{2} & p_{1} p_{2} (1 - p_{1} p_{2})
\end{bmatrix}
=
\begin{bmatrix}
\beta \frac{F(x_{1})}{(1 - F(x_{1}))^{2}} & \beta \frac{F(x_{1})}{(1 - F(x_{1}))^{2}} (1 - F(x_{2})) \\
\beta \frac{F(x_{1})}{(1 - F(x_{1}))^{2}} (1 - F(x_{2})) & F(x_{2}) (1 - F(x_{2}))
\end{bmatrix}
.
\end{align*}
\endgroup
We conclude that $(\tilde{B}_{n} (x_{1}), \tilde{B}_{n} (x_{2}))$ converges weakly to a zero-mean normal distribution whose covariance matrix is specified by \eqref{def:Y_covariance}.

\emph{Case III.} If $x_{1} < x_{2} < x^{*}_{\beta}$, then \eqref{eq:Bnx} implies that $(\tilde{B}_{n} (x_{1}), \tilde{B}_{n} (x_{2}))$ is asymptotically equivalent to
\begin{align*}
\bigg( \sqrt{n} \bigg( \frac{1}{n} I_{\beta n} (x_{1}) - \beta - \beta \frac{F(x_{1})}{1 - F(x_{1})} \bigg), \sqrt{n} \bigg( \frac{1}{n} I_{\beta n} (x_{2}) - \beta - \beta \frac{F(x_{2})}{1 - F(x_{2})} \bigg) \bigg).
\end{align*}
The first element of this random vector equals $\frac{1}{\sqrt{n}} (S_{[n \beta]} (x_{1}) - \frac{1}{p_{1}} n \beta)$, while the second element equals $\frac{1}{\sqrt{n}} (S_{S_{[n \beta]} (x_{2})} (x_{1}) - \frac{1}{p_{1} p_{2}} n \beta)$. We can rewrite the last expression as
\begin{align*}
\frac{1}{\sqrt{n}} \big( S_{n \bar{S}_{[n \beta]} (x_{2})} (x_{1}) - \frac{1}{p_{1}} n \bar{S}_{[n \beta]} (x_{2}) \big) + \frac{1}{p_{1}} \frac{1}{\sqrt{n}} (S_{[n \beta]} (x_{2}) - \frac{1}{p_{2}} n \beta),
\end{align*}
where we define $\bar{S}_{[n \beta]} (x_{2}) = \frac{1}{n} S_{[n \beta]} (x_{2})$. 
Thus $(\tilde{B}_{n} (x_{1}), \tilde{B}_{n} (x_{2}))$ is asymptotically equivalent to
\begin{align*}
\bigg(
\frac{1}{\sqrt{n}} (S_{[n \beta]} (x_{1}) - \frac{1}{p_{1}} n \beta),
\frac{1}{\sqrt{n}} \big( S_{n \bar{S}_{[n \beta]} (x_{2})} (x_{1}) - \frac{1}{p_{1}} n \bar{S}_{[n \beta]} (x_{2}) \big) + \frac{1}{p_{1}} \frac{1}{\sqrt{n}} (S_{[n \beta]} (x_{2}) - \frac{1}{p_{2}} n \beta)
\bigg).
\end{align*}
Taking into account that $\bar{S}_{[n \beta]} (x_{2})$ converges in probability to $\beta / p_{2}$, we see that this vector converges weakly to
\begingroup
\renewcommand*{\arraystretch}{1.75}
\setlength\arraycolsep{4pt}
\begin{align*}
\begin{bmatrix}
1 & 0 & 0 \\
0 & 1 & \frac{1}{p_{1}}
\end{bmatrix}
\begin{bmatrix}
\frac{\sqrt{1 - p_{1}}}{p_{1}} W_{\beta}^{(1)} \\
\frac{\sqrt{1 - p_{1}}}{p_{1}} W_{\beta / p_{2}}^{(1)} \\
\frac{\sqrt{1 - p_{2}}}{p_{2}} W_{\beta}^{(2)}
\end{bmatrix}
=
\begin{bmatrix}
\frac{\sqrt{1 - p_{1}}}{p_{1}} & 0 & 0 \\
0 & \frac{\sqrt{1 - p_{1}}}{p_{1}} & \frac{\sqrt{1 - p_{2}}}{p_{1} p_{2}}
\end{bmatrix}
\begin{bmatrix}
W_{\beta}^{(1)} \\
W_{\beta / p_{2}}^{(1)} \\
W_{\beta}^{(2)}
\end{bmatrix}
.
\end{align*}
\endgroup
This implies that $(\tilde{B}_{n} (x_{1}), \tilde{B}_{n} (x_{2}))$ converges weakly to a zero-mean normal distribution with covariance matrix
\begingroup
\renewcommand*{\arraystretch}{1.75}
\setlength\arraycolsep{4pt}
\begin{align*}
\begin{bmatrix}
\frac{\sqrt{1 - p_{1}}}{p_{1}} & 0 & 0 \\
0 & \frac{\sqrt{1 - p_{1}}}{p_{1}} & \frac{\sqrt{1 - p_{2}}}{p_{1} p_{2}}
\end{bmatrix}
\begin{bmatrix}
\beta & \beta & 0 \\
\beta & \frac{\beta}{p_{2}} & 0 \\
0 & 0 & \beta
\end{bmatrix}
\begin{bmatrix}
\frac{\sqrt{1 - p_{1}}}{p_{1}} & 0 \\
0 & \frac{\sqrt{1 - p_{1}}}{p_{1}} \\
0 & \frac{\sqrt{1 - p_{2}}}{p_{1} p_{2}}
\end{bmatrix}
,
\end{align*}
\endgroup
which equals
\begingroup
\renewcommand*{\arraystretch}{1.75}
\setlength\arraycolsep{4pt}
\begin{align*}
\begin{bmatrix}
\beta \frac{1 - p_{1}}{p_{1}^{2}} & \beta \frac{1 - p_{1}}{p_{1}^{2}} \\
\beta \frac{1 - p_{1}}{p_{1}^{2}} & \beta \frac{1 - p_{1} p_{2}}{p_{1}^{2} p_{2}^{2}}
\end{bmatrix}
=
\begin{bmatrix}
\beta \frac{F(x_{1})}{(1 - f(x_{1}))^{2}} & \beta \frac{F(x_{1})}{(1 - f(x_{1}))^{2}} \\
\beta \frac{F(x_{1})}{(1 - f(x_{1}))^{2}} & \beta \frac{F(x_{2})}{(1 - f(x_{2}))^{2}}
\end{bmatrix}
.
\end{align*}
\endgroup
We conclude that $(\tilde{B}_{n} (x_{1}), \tilde{B}_{n} (x_{2}))$ converges weakly to a zero-mean normal distribution whose covariance matrix is specified by \eqref{def:Y_covariance}.\\

The previous arguments establish the convergence of the two-dimensional distributions of $\tilde{B}_{n}$ on a dense subset of the real line. The computations already reveal that the limit is Gaussian and provide the corresponding covariance matrix. Convergence of the higher-dimensional distribution can be derived along similar lines.

\subsection{Tightness}
We now turn to tightness of the process $\tilde{B}_{n}$ in the M1 topology. For completeness, we first present characterizations of tightness and then use these to establish the M1 tightness of $\tilde{B}_{n}$. Further background for tightness and the M1 topology can be found in \cite[Ch.~11--13]{whitt2002}.

A sequence of stochastic processes $Z_{n}$ on $\mathbb{R}$ is C-tight if for every interval $[-\tau, \tau]$ the following two conditions are met. First, for every $\epsilon > 0$ there exists a positive constant $c$ such that $\mathbb{P} (\abs{Z_{n} (-\tau)} > c) < \epsilon$ for all $n$. Second, for every $\epsilon > 0$ and $\eta > 0$ there exist $\delta > 0$ and $N$ such that
\begin{align*}
\mathbb{P} \Big( u_{\tau} (Z_{n}, \delta) > \eta \Big) < \epsilon
\end{align*}
for all $n \geq N$. The oscillation function $u_{\tau}$ is defined by taking
\begin{align*}
u_{\tau} (z, x, \delta) = \sup \lbrace \abs{z(x_{1}) - z(x_{2})} \, : \, -\tau \vee (x - \delta) \leq x_{1} < x_{2} \leq (x + \delta) \wedge \tau \rbrace
\end{align*}
and
\begin{align*}
u_{\tau} (z, \delta) = \sup_{-\tau \leq x \leq \tau} u_{\tau} (z, x, \delta).
\end{align*}
The process $Z_{n}$ converges weakly to a continuous stochastic process $Z$ if and only if its fdds converge weakly to the fdds of $Z$ and $Z_{n}$ is C-tight.

The process $\tilde{B}_{n}$ clearly does not converge to a continuous stochastic process: its limit exhibits a jump at the point $x^{*}_{\beta}$.  {This is apparent in Figure \ref{fig:discontinuity_uniform} as well as in the form of the covariance structure in \eqref{def:Y_covariance}.} In view of this, the M1 topology is the natural choice for establishing weak convergence of $\tilde{B}_{n}$.

A sequence of stochastic processes $Z_{n}$ on $\mathbb{R}$ converges to $Z$ in the M1 topology if its fdds converge weakly to the fdds of $Z$ on a dense subset of $\mathbb{R}$ and $Z_{n}$ is tight in the M1 topology. The sequence $Z_{n}$ is tight in the M1 topology if for every element $\tau$ in a positive, unbounded sequence $\lbrace \tau_{k} \rbrace_{k=1}^{\infty}$ the following two conditions are met. First, for every $\epsilon > 0$ there exists a positive constant $c$ such that $\mathbb{P} (\sup \lbrace \abs{Z_{n} (x)} \, : \, -\tau \leq x \leq \tau \rbrace > c) < \epsilon$ for all $n$. Second, for every $\epsilon > 0$ and $\eta > 0$ there exist $\delta > 0$ and $N$ such that
\begin{align*}
\mathbb{P} \Big( w_{\tau} (Z_{n}, \delta) > \eta \Big) < \epsilon
\end{align*}
for all $n \geq N$. The oscillation function $w_{\tau}$ is defined as follows. We interpret $[a, b]$ as the interval $[a \wedge b, a \vee b]$ and let $\abs{c - [a, b]}$ denote the distance between a point $c$ and the interval $[a, b]$. We then take
\begin{align*}
v_{\tau} (z, x, \delta) = \sup \lbrace \abs{z(x_{2}) - [z(x_{1}), z(x_{3})]} \, : \, -\tau \vee (x - \delta) \leq x_{1} < x_{2} < x_{3} \leq (x + \delta) \wedge \tau \rbrace,
\end{align*}
and
\begin{align*}
v_{\tau} (z, \delta) = \sup_{-\tau \leq x \leq \tau} v_{\tau} (z, x, \delta),
\end{align*}
which are used to define the oscillation function
\begin{align*}
w_{\tau} (z, \delta) = u_{\tau} (z, -\tau, \delta) \vee v_{\tau} (z, \delta) \vee u_{\tau} (z, \tau, \delta).
\end{align*}
We use these characterizations of tightness to prove that $\tilde{B}_{n}$ is tight in the M1 topology.

To establish tightness of $\tilde{B}_{n}$ in the M1 topology, we fix $\tau > 1 + |x^{*}_{\beta}|$ as well as $\epsilon > 0$ and $\eta > 0$. As a first step we note that the processes
\begin{align*}
\tilde{I}_{n} (x) = \sqrt{n} \Big( \frac{1}{n} I_{1 + \beta n} (x) - \beta \frac{F(x)}{1 - F(x)} \Big)
\end{align*}
and
\begin{align*}
\tilde{H}_{n} (x) = \sqrt{n} \Big( \frac{1}{n} H_{n} (x) - (1 - F(x)) \Big)
\end{align*}
are both C-tight as a consequence of \cite[Th.~11.6.5]{whitt2002}. This implies that there exist $\delta > 0$ and $N$ such that the next three statements are true. First, it implies that $\mathbb{P} (\sup \lbrace \vert \tilde{B}_{n} (x) \vert \, : \, -\tau \leq x \leq \tau \rbrace > c) < \epsilon$ for a sufficiently large constant, since $|\tilde{B}_{n} (x)| \leq |\tilde{I}_{n} (x)| + |\tilde{H}_{n} (x)|$. Second, it implies that $\mathbb{P} (u_{\tau} (\tilde{B}_{n}, -\tau, \delta) > \eta) < \epsilon / 3$ and $\mathbb{P} (u_{\tau} (\tilde{B}_{n}, \tau, \delta) > \eta) < \epsilon / 3$. Third, it implies that
\begin{align*}
\frac{1}{n} I_{1 + \beta n} (x_{1}) 
&= \frac{1}{n} I_{1 + \beta n} (x_{2}) + \Big( \beta \frac{F(x_{1})}{1 - F(x_{1})} - \beta \frac{F(x_{2})}{1 - F(x_{2})} \Big) + o \Big( \frac{\eta}{\sqrt{n}} \Big) \\
&= \frac{1}{n} I_{1 + \beta n} (x_{2}) + \Big( \frac{\beta}{1 - F(x_{1})} - \frac{\beta}{1 - F(x_{2})} \Big) + o \Big( \frac{\eta}{\sqrt{n}} \Big)
\end{align*}
and
\begin{align*}
\frac{1}{n} H_{n} (x_{1}) = \frac{1}{n} H_{n} (x_{2}) + (F(x_{1}) - F(x_{2})) + o \Big( \frac{\eta}{\sqrt{n}} \Big)
\end{align*}
for all $\abs{x_{1} - x_{2}} < \delta$ on a set $\Omega^{*}$ with probability at least $1 - \epsilon / 3$ for all $n \geq N$. Here each $o (c)$ indicates a term whose absolute value is at most $c > 0$.

We now analyze four separate cases for $x_{1} < x_{2} < x_{3}$ on the set $\Omega^{*}$. We show that in each case
\begin{align}
\label{eq:M1inequalities}
\tilde{B}_{n} (x_{1}) + o(2 \eta) \leq \tilde{B}_{n} (x_{2}) \leq \tilde{B}_{n} (x_{3}) + o(2 \eta)
\end{align}
or
\begin{align}
\label{eq:M1inequalities_reversed}
\tilde{B}_{n} (x_{1}) + o(2 \eta) \geq \tilde{B}_{n} (x_{2}) \geq \tilde{B}_{n} (x_{3}) + o(2 \eta),
\end{align}
implying that $\vert \tilde{B}_{n} (x_{2}) - [\tilde{B}_{n} (x_{1}), \tilde{B}_{n} (x_{3})] \vert \leq 2 \eta$. The M1 tightness of $\tilde{B}_{n}$ is a direct consequence of this.

Case I: $\frac{1}{n} H_{n} (x_{3}) > \beta$. In this case $\frac{1}{n} H_{n} (x_{1}) > \beta$ and $\frac{1}{n} H_{n} (x_{2}) > \beta$, so
\begin{align*}
B_{n} (x_{1}) = B_{n} (x_{2}) + \Big( \frac{\beta}{1 - F(x_{1})} - \frac{\beta}{1 - F(x_{2})} \Big) + o \Big( \frac{\eta}{\sqrt{n}} \Big)
\end{align*}
and
\begin{align*}
B_{n} (x_{2}) = B_{n} (x_{3}) + \Big( \frac{\beta}{1 - F(x_{2})} - \frac{\beta}{1 - F(x_{3})} \Big) + o \Big( \frac{\eta}{\sqrt{n}} \Big).
\end{align*}
This yields
\begin{align}
\label{eq:Bn_tightness_inequality_case_1_1}
\tilde{B}_{n} (x_{1}) = \tilde{B}_{n} (x_{2}) + \sqrt{n} \Big( \frac{\beta}{1 - F(x_{1})} - \frac{\beta}{1 - F(x_{2})} + B(x_{2}) - B(x_{1}) \Big) + o(\eta).
\end{align}
and
\begin{align*}
\tilde{B}_{n} (x_{2}) = \tilde{B}_{n} (x_{3}) + \sqrt{n} \Big( \frac{\beta}{1 - F(x_{2})} - \frac{\beta}{1 - F(x_{3})} + B(x_{3}) - B(x_{2}) \Big) + o(\eta).
\end{align*}
Now note that
\begin{align*}
& \frac{\beta}{1 - F(x_{i})} - \frac{\beta}{1 - F(x_{i+1})} + B(x_{i+1}) - B(x_{i}) \\
& {} = \min \Big\lbrace 0, F(x_{i+1}) - \beta \frac{F(x_{i+1})}{1 - F(x_{i+1})} \Big\rbrace - \min \Big\lbrace 0, F(x_{i}) - \beta \frac{F(x_{i})}{1 - F(x_{i})} \Big\rbrace \leq 0,
\end{align*}
since $\min \lbrace 0, F(x) - \beta \frac{F(x)}{1 - F(x)} \rbrace$ is nonincreasing. Therefore \eqref{eq:M1inequalities} holds in this case.

Case II: $\frac{1}{n} H_{n} (x_{3}) \leq \beta$ and $\frac{1}{n} H_{n} (x_{2}) > \beta$. In this case $\frac{1}{n} H_{n} (x_{1}) > \beta$, so \eqref{eq:Bn_tightness_inequality_case_1_1} is satisfied. Based on the equality
\begin{align*}
1 + \beta - \frac{1}{n} I_{1 + \beta n} (x_{2}) - \frac{1}{n} H_{n} (x_{3}) 
\geq 1 + \beta - \frac{1}{n} I_{1 + \beta n} (x_{2}) - \frac{1}{n} H_{n} (x_{2}) 
\geq 0
\end{align*}
as well as analogous arguments as in the previous case, we see that
\begin{align*}
\tilde{B}_{n} (x_{3}) \geq \tilde{B}_{n} (x_{2}) 
&+ \sqrt{n} \max \Big\lbrace F(x_{3}) - (1 - \beta), \frac{\beta}{1 - F(x_{3})} - 1 \Big\rbrace \\
&{} - \sqrt{n} \max \Big\lbrace F(x_{2}) - (1 - \beta), \frac{\beta}{1 - F(x_{2})} - 1 \Big\rbrace + o(2 \eta).
\end{align*}
Since $\max \lbrace F(x) - (1 - \beta), \frac{\beta}{1 - F(x)} - 1 \rbrace$ is nondecreasing, we conclude that $\tilde{B}_{n} (x_{3}) + o(2 \eta) \geq \tilde{B}_{n} (x_{2})$. Therefore \eqref{eq:M1inequalities} holds in this case.

Case III: $\frac{1}{n} H_{n} (x_{2}) \leq \beta$ and $\frac{1}{n} H_{n} (x_{1}) > \beta$. In this case $1 + \beta \leq \frac{1}{n} I_{1 + \beta n} (x_{2}) + \frac{1}{n} H_{n} (x_{1})$, which leads to
\begin{align*}
\tilde{B}_{n} (x_{1}) \geq \tilde{B}_{n} (x_{2}) 
&- \sqrt{n} \max \Big\lbrace F(x_{3}) - (1 - \beta), \frac{\beta}{1 - F(x_{3})} - 1 \Big\rbrace \\
&{} + \sqrt{n} \max \Big\lbrace F(x_{2}) - (1 - \beta), \frac{\beta}{1 - F(x_{2})} - 1 \Big\rbrace + o(2 \eta).
\end{align*}
Since $\max \lbrace F(x) - (1 - \beta), \frac{\beta}{1 - F(x)} - 1 \rbrace$ is nondecreasing, it follows that $\tilde{B}_{n} (x_{1}) + o(2 \eta) \geq \tilde{B}_{n} (x_{2})$. We also note that both $\frac{1}{n} H_{n} (x_{2}) \leq \beta$ and $\frac{1}{n} H_{n} (x_{3}) \leq \beta$, so
\begin{align}
\label{eq:Bn_tightness_inequality_case_3_2}
\tilde{B}_{n} (x_{2}) 
&= \tilde{B}_{n} (x_{3}) + \sqrt{n} \Big( \min \Big\lbrace \beta \frac{F(x_{2})}{1 - F(x_{2})}, 0 \Big\rbrace - \min \Big\lbrace \beta \frac{F(x_{1})}{1 - F(x_{1})}, 0 \Big\rbrace \Big) + o(\eta).
\end{align}
Since $\min \lbrace \beta \frac{F(x)}{1 - F(x)}, 0 \rbrace$ is nondecreasing, this yields $\tilde{B}_{n} (x_{2}) \geq \tilde{B}_{n} (x_{3}) + o(\eta)$. Therefore \eqref{eq:M1inequalities_reversed} holds in this case.

Case IV: $\frac{1}{n} H_{n} (x_{1}) \leq \beta$. In this case $\frac{1}{n} H_{n} (x_{3}) \leq \frac{1}{n} H_{n} (x_{2}) \leq \frac{1}{n} H_{n} (x_{1}) \leq \beta$, implying that \eqref{eq:Bn_tightness_inequality_case_3_2} is valid and remains so if we replace $x_{2}$ by $x_{1}$ and $x_{3}$ by $x_{2}$. Therefore \eqref{eq:M1inequalities_reversed} holds in this case.

\section{Analyzing the bubble sort statistic}
\label{sec:analyzing}

In this section we prove Theorem~\ref{thm:gKS_convergence} and Lemma~\ref{lem:supremum_cdf}. 
The result of Theorem \ref{thm:bubble_sort_diffusion} combined with the fact that weak convergence of a process in the M1 topology implies weak convergence of its supremum (cf.\ \cite[Th.~13.4.1]{whitt2002}), implies that $\hat{D}^\beta_n = \sup_{x\in\R} \sqrt{n}(\hat B^\beta_n(x) - B^\beta_0(x))$ converges in distribution to $\sup_{x\in\R} Y^\beta_0(x)$. It remains to show that $\sup_{x\in\R} Y_0^\beta(x)$ equals ${D}^\beta$ in distribution (Theorem~\ref{thm:gKS_convergence}), and to derive an expression for the distribution of ${D}^\beta$ (Lemma~\ref{lem:supremum_cdf}).


We prove Theorem~\ref{thm:gKS_convergence} in two steps. First, conditional on the jump of $Y_0^\beta$ in $x^{*}_{\beta}$, we relate the distribution of $\sup_{x\in\R} Y_0^\beta(x)$ to the distribution of generalized Brownian bridges. Based on this, we then argue that $\sup_{x\in\R} Y_0^\beta(x)$ indeed follows a generalized Kolmogorov distribution.

Consider a right-continuous, centred Gaussian process $\{\check Y^\beta(t) : t\in[0,1]\}$ with covariance function
\begin{align*}
\cov(\check Y^\beta(t_1),\check Y^\beta(t_2)) = 
\begin{cases}
\frac{\beta t_1}{(1-t_1)^2} & \text{ if } t_1 \leq t_2 < 1 - \beta, \\
\frac{\beta t_1(1-t_2)}{(1-t_1)^2} & \text{ if } t_1 < 1 - \beta < t_2, \\
t_1(1-t_2) & \text{ if }  1 - \beta < t_1 \leq t_2.
\end{cases}
\end{align*}
Clearly $Y^\beta_0(t) = \check Y^\beta(F_0(t))$ and $\sup_t |\check Y^\beta(t)| = \sup_x |Y^\beta_0(x)|$, since $F_0$ is continuous. We define the function $\omega(t) = \beta t/(1-t)^2$, which is strictly increasing for $0 < t < 1$. Additionally, we define the two stochastic processes $\{W_1(t) : t\in[0,\tfrac{1-\beta}{\beta}]\}$ and $\{W_2(t) : t\in[0, \beta]\}$, where
\begin{align*}
W_1(t) & = \check Y^\beta(\omega^{-1}(t)), \ t\in[0,\tfrac{1-\beta}{\beta}) \quad \text{and} \quad W_1(\tfrac{1-\beta}{\beta}) = \lim_{t\to(1-\beta)^-} \check Y^\beta(\omega^{-1}(t)), \\
W_2(t) & = \check Y^\beta(1-t), \ t\in[0,\beta] \quad \text{and} \quad W_2(\beta) = \lim_{t\to(1-\beta)^+} \check Y^\beta(t).
\end{align*}
These processes can be thought of as the (time-scaled and reversed) left and right parts of $\check Y^\beta$, viewed relative to the time point $t=1-\beta$. Similar to before, we have
\[
\sup_t |\check Y^\beta(t)| = \sup_t |W_1(t)| ~ \vee ~\sup_t |W_2(t)|
\]
and
\begin{align*}
\cov(W_1(t_1), W_1(t_2)) & = t_1 &&\text{for} \quad 0\leq t_1\leq t_2\leq \tfrac{1-\beta}{\beta}, \\
\cov(W_1(t_1), W_2(t_2)) & = t_1t_2 &&\text{for} \quad t_1\in[0,\tfrac{1-\beta}{\beta}], t_2\in[0,\beta], \\
\cov(W_2(t_1), W_2(t_2)) & = t_1(1-t_2) &&\text{for} \quad 0 \leq t_1 \leq t_2 \leq \beta.
\end{align*}

We proceed to study the distributions of $W_1$ and $W_2$ conditional on $W_2(\beta)$. Since $W_1$ and $W_2$ are jointly Gaussian, their conditional distribution is also jointly Gaussian. To determine this conditional distribution, it suffices to determine the conditional means, covariances, and cross-correlations.

First, we focus on the cross-correlation. For any $t_1\in[0,\tfrac{1-\beta}{\beta}]$ and $t_2\in[0,\beta]$ we have
\begin{align*}
& \cov(W_1(t_1), W_2(t_2) \mid W_2(\beta)) \\
& = \cov(W_1(t_1), W_2(t_2)) - \frac{\cov(W_1(t_1), W_2(\beta)) \cdot \cov(W_2(t_2), W_2(\beta))}{\var(W_2(\beta))} \\
& = t_1t_2 - \frac{t_1 \beta \cdot t_2(1-\beta)}{\beta(1-\beta)} \\
& = 0,
\end{align*}
implying that $W_1$ and $W_2$ are independent conditional on $W_2(\beta)$.

Second, we study the conditional distribution of $W_1$. For any $t\in[0,\tfrac{1-\beta}{\beta}]$ we have
\begin{align*}
\e(W_1(t) \mid W_2(\beta)) & = \e W_1(t) - \frac{\cov(W_1(t_1), W_2(\beta))}{\var(W_2(\beta))} \cdot \left(W_2(\beta) - \e W_2(\beta)\right) \\
& = \frac{t \beta}{\beta(1-\beta)} \cdot W_2(\beta) \\
& = \frac{t}{\frac{1-\beta}{\beta}} \cdot \frac{W_2(\beta)}{\beta}.
\end{align*}
For any $0\leq t_1\leq t_2\leq \tfrac{1-\beta}{\beta}$ we have
\begin{align*}
& \cov(W_1(t_1), W_1(t_2) \mid W_2(\beta)) \\
& = \cov(W_1(t_1), W_1(t_2)) - \frac{\cov(W_1(t_1), W_2(\beta)) \cdot \cov(W_1(t_2), W_2(\beta))}{\var(W_2(\beta))} \\
& = t_1 - \frac{t_1 \beta \cdot t_2 \beta}{\beta(1-\beta)} \\
& = \frac{t_1\cdot(\tfrac{1-\beta}{\beta}-t_2)}{\frac{1-\beta}{\beta}}.
\end{align*}
Similarly we obtain the conditional mean and covariance of $W_2$. For any $t\in[0,\beta]$ and $0\leq t_1\leq t_2\leq \beta$ we have
\begin{align*}
\e(W_2(t) \mid W_2(\beta)) = \frac{t}{\beta} \cdot W_2(\beta)
\end{align*}
and
\begin{align*}
\cov(W_2(t_1), W_1(t_2) \mid W_2(\beta)) = \frac{t_1(\beta-t_2)}{\beta}.
\end{align*}

The previous arguments demonstrate that, conditional on $W_2(\beta)$, the processes $W_1$ and $W_2$ are independent. Using the formulas for the mean and covariance of a generalized Brownian bridge. We conclude that
\begin{equation}
\label{eq:law-w1-w2-different}
\big(W_1\mid W_2(\beta)\big) \sim {\rm BB}(\tfrac{1-\beta}{\beta},\tfrac{W_2(\beta)}{\beta}) \quad \text{and} \quad \big(W_2 \mid W_2(\beta)\big) \sim {\rm BB}(\beta, W_2(\beta)).
\end{equation}
Further, since $W_2(\beta)$ is a normally distributed zero mean random variable with variance $\beta(1-\beta)$, it is equal in distribution to 
$\sqrt{\beta(1-\beta)}Z$, where $Z$ is a standard normal random variable. Hence the representation in \eqref{eq:law-w1-w2-different} is equivalent to \eqref{eq:law-w1-w2} which is used to construct $D^\beta$ as in \eqref{eq:D-beta-prob-def}. This concludes the proof of Theorem~\ref{thm:gKS_convergence}.

We close this section with a derivation of the cdf of $D^\beta$ as defined in  \eqref{eq:gen-kolg-dist}.

\begin{proof}[Proof of Lemma~\ref{lem:supremum_cdf}]
Based on the conditional independence between $W_1$ and $W_2$ as well as the definition of $\Psi(x;T,a)$ of \eqref{eq:psi-x-t-a-def}, we obtain

\begin{align*}
\p\Big(D^\beta \leq x\Big) & = \e\Big[\p\big(\sup_{t \in [0, (1-\beta)/\beta]} |W_1(t)| \leq x, \sup_{t \in [0, \beta]} |W_2(t)| \leq x \mid Z\big)\Big] \\
& = \e\Big[\p\big(\sup_{t \in [0, (1-\beta)/\beta]} |W_1(t)| \leq x \mid Z \big)~\p\big(\sup_{t \in [0, \beta]} |W_2(t)| \leq x \mid Z\big)\Big] \\
& = \int_{-\infty}^\infty \Psi \big(x;\tfrac{1-\beta}{\beta}, \sqrt{\tfrac{1-\beta}{\beta}}\cdot z \big) \Psi \big(x;\beta, \sqrt{\beta(1-\beta)}\cdot z\big)\phi(z) \, {\rm d}z.
\end{align*}

The integrand in the last line is non-zero if and only if $x > \sqrt{(1-\beta)/\beta}\cdot |z|$ and $x > \sqrt{(1-\beta)\beta}\cdot |z|$. Finally, using that $\Psi(x;T,a) = \Psi(x;T,-a)$ for all $a\in\R$, we obtain the desired result.
\end{proof}

\section{Numerical Examples}
\label{sec:numerical}

We now present two scenarios where the bubble sort based procedure of Section~\ref{sec:gen-procedure} is useful with the purpose of showing the potential value of nonparametric testing with partial sorting. The first scenario deals with detecting partial sorting of tabular data, and the second deals with detecting service time dependent scheduling policies in queues. Both of these examples illustrate that there are cases where partial sorting based procedures can be used to detect lack of independence in a better manner than KS or~WW.

In fact, in any situation where one considers the hypotheses \eqref{eq:basic-hypothesis}, it is possible to use partial sorting ($\beta < 1$) in place of the standard $\beta = 1$ KS test, or the WW test. However, in our experimentation with standard cases involving simulated data from auto-regressive sequences, or data from hidden Markov sequences, we did not always observe a clear benefit in partial sorting. This may be due to the fact that the number of observations falling on the empirical bubble sort curve \eqref{eq:emp-bubble-sort-curve} is typically significantly smaller than those falling on the ecdf of the data. Nevertheless, as the following two examples show, there certainly exist cases where partial sorting is beneficial.

In both examples below, we assume that the distribution $F_0$ of \eqref{eq:basic-hypothesis} is known and is not the subject of testing. Instead our focus is on testing for lack of independence which is detected when $H_0$ is rejected. Further, in both examples we consider a grid of sorting levels $\beta \in (0,1]$. As we see, the performance of the test is heavily influenced by the choice of $\beta$. Policies to optimally choose $\beta$, or perhaps to carry out more involved procedures based on multiple $\beta$ values, are not within the scope of the current paper. Our purpose here is simply to illustrate the potential value of nonparametric testing with partial sorting. 

In our experimentation with both examples, data is generated via simulations in scenarios where $H_0$ does not hold. We then quantify performance by fixing $\alpha = 0.1$ and estimating the probability of rejecting $H_0$ via repeated simulation experiments. The results are then plotted for $\beta \in (0,1]$, where we keep in mind that $\beta = 1$ is the KS test. We also summarize results from the WW test on the same data for comparison.

\subsection*{Example 1: Detection of magnitude based sorting of a hidden column in tabular data}

Consider the common setting of numerical tabular data, e.g.\ a data frame or a spreadsheet, where different variables constitute different columns and observations are in rows. With such data it is very common to sort the observations by a given variable/column. This then modifies the order of other columns accordingly.

\begin{figure}
\centering
\includegraphics[width=10cm]{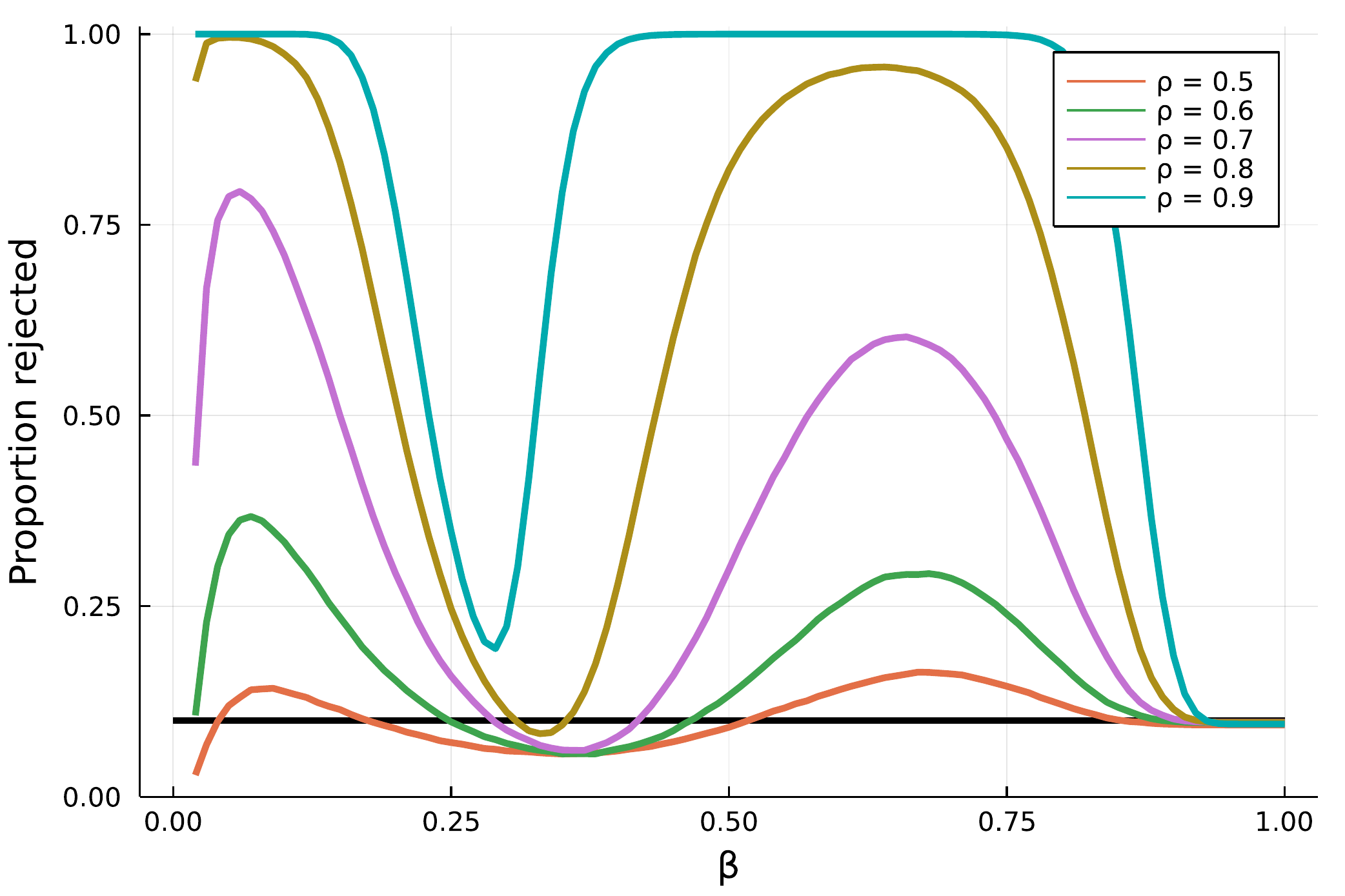}
\caption{Example 1: Detection of magnitude based sorting of a hidden column in tabular data. Results for $n=1,000$, $\alpha = 0.1$ (black line), and several levels of $\rho$. The proportion of cases detected ($H_0$ rejected) is plotted as a function of the sorting level $\beta$.}
\label{fig:proxy-sort-100}
\end{figure}

Assume now that we are presented with $x_1,\ldots,x_n$, the variables of a single column from the data and we wish to test if the data was sorted based on an unobserved column, say $\tilde{x}_1,\ldots,\tilde{x}_n$. Here correlations between columns may leave ``traces'' of the sorting of the unobserved $\tilde{x}_1,\ldots,\tilde{x}_n$ in the observed $x_1,\ldots,x_n$. The KS test will not detect such sorting since it is oblivious to the order of the data, but in many cases WW will do well. However, assume further that $\tilde{x}_1,\ldots,\tilde{x}_n$ is sorted based on the magnitude $|\tilde{x}_i|$. In this case, WW fails as well because it is based on runs around the median. Nevertheless, as we demonstrate, with a suitably chosen sorting level $\beta$, the partial bubble sorting test procedure works well.

To experiment with this scenario we consider Gaussian random vectors $(\tilde{X}, X)$ with zero mean and unit variance for both $\tilde{X}$ and $X$. The correlation coefficient between $\tilde{X}$ and $X$ is set at $\rho$. We generate $n$ repeated observations from this random vector and then sort the observations according to $|\tilde{X}|$. Now taking the second coordinate of the sorted data yields the resulting random vector $X_1,\ldots,X_n$ on which we execute the bubble sort testing procedure of Section~\ref{sec:gen-procedure}. In the extreme case of $\rho = 0$ the data $X_1,\ldots,X_n$ is iid and $H_0$ holds. However, as the magnitude of the correlation, $|\rho|$ grows towards $1$, traces of the sorting of $|\tilde{X}|$ may start to be detected in the data.

Figure~\ref{fig:proxy-sort-100} presents numerical results from an experiment with such data. In this case we considered samples of size $n=1,000$ and several correlation levels, $\rho$, ranging from $\rho=0.5$ which is hardly detectable to $\rho=0.9$ which is easily detectable. For each $\rho$, we ran $30,000$ Monte Carlo repetitions where for each repetition we repeated over a grid of sorting levels~$\beta$. We also used the same data for the WW test and for all $\rho$ values were not able to reject $H_0$ more than $10\%$ ($\alpha$) of the runs. Further, the KS test ($\beta = 1$) also fails. Interestingly, the bubble sort based procedure works well for some $\beta$ values and not for others with a dip in the ability to detect for $\beta \approx 0.3$. Our key point here though is that by choosing a suitable sorting level $\beta$, the sorting of the of the hidden column can be detected via the bubble sort test procedure but not via KS or WW.

\subsection*{Example 2: Service Time Dependent Scheduling in Queues}

We now consider a queueing system with a single server which serves jobs one by one when they are present, or idles otherwise. For general concepts of queueing models, and statistics of queues see for example \cite{asanjarani2021survey}, or references there-in. In our context we consider a finite set of job arrivals to the queue and observe the sequence of service durations of processed jobs, $X_1,\ldots,X_n$, where $X_1$ is the service duration of the first job served, $X_2$ is the service duration of the second job served, and so fourth. 

Apriori, before being served in the queue, the jobs arriving to the system have iid service durations with a known distribution $F_0$. The scheduling policy used to determine which job to serve next considers the available jobs in the queue at any time at which the server is free. This policy can affect the system behaviour and the nature of $X_1,\ldots,X_n$. With scheduling policies such as first come first serve, last come first serve, or random order of service, the output sequence of processed job durations $X_1,\ldots,X_n$ is iid because these policies do not depended on the actual service time of a job for scheduling. These policies and all other policies which induce iid service durations of processed jobs constitute $H_0$ in this example

Other policies may yield non-independent $X_1,\ldots,X_n$ and all such policies constitute $H_1$. One such policy is a policy where the server always selects the job with the smallest duration from the queue. In cases where there is occasional congestion in the system and multiple jobs wait in the queue simultaneously, this policy yields non-independent sequences of service times $X_1,\ldots,X_n$. To see this consider the extreme example in which all jobs arrive to the queue at the same time. In such a case the resulting sequence is completely sorted. Such a scheduling policy can generally be detected well by the WW test or the bubble sort testing procedure (with $\beta < 1)$, but not by the KS test.

\begin{figure}
\centering
\includegraphics[width=10cm]{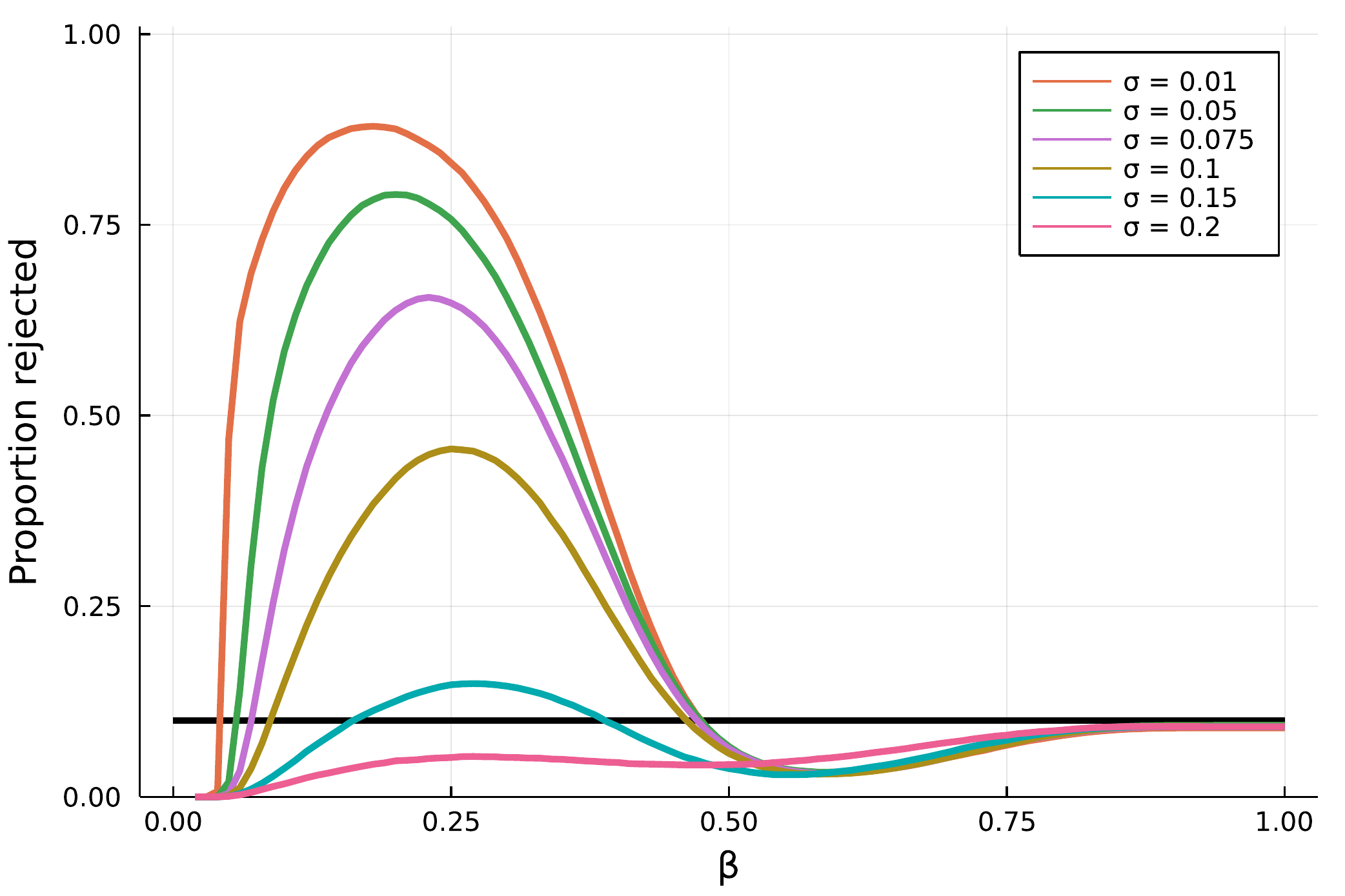}
\caption{Example 2: Service Time Dependent Scheduling in Queues. 
Results for $n=100$, $\alpha = 0.1$ (black line), and several levels of $\sigma$. The proportion of cases detected ($H_0$ rejected) is plotted as a function of the sorting level $\beta$.}
\label{fig:queue100}
\end{figure}

With other scheduling policies in $H_1$ it can be more difficult to confirm that the service requirements of the output process are not independent. We now focus on an experiment with one such policy. The policy is that the server randomly selects either the smallest or the largest job in the queue to go into service. This implies that the service requirements of the output process are dependent but it also yields sequences that are more difficult for the WW test to detect. Nevertheless, as we show via a numerical example, for suitably chosen sorting level $\beta$ the bubble sort based procedure exhibits solid power.

In our example we resort to an arrival process where the arrival times jobs are iid random variables which each follow the law of $e^{A}$, where $A$ has a normal distribution with expectation $\log(n)$ and a variance $\sigma^{2}$. As $\sigma \to 0$ we get that all jobs arrive simulataniously and as $\sigma$ grows we get a bigger spread between jobs implying lower congestion. We resort to $n=100$ jobs so the mean of $A$ is at around $4.6$. We take service durations of the jobs arriving to the queue as iid standard uniform random variables. With these parameters, to obtain some congestion in the queue we test levels of $\sigma$ at $0.01, 0.05, 0.075$, and $0.1$. To compare we also consider $\sigma = 0.15$ and $\sigma = 0.2$ at which point queue congestion is much less common. 

For each $\sigma$, we ran $100,000$ Monte Carlo repetitions where for each repetition we repeated over a grid of sorting levels~$\beta$. As in Example~1 we set $\alpha=0.1$ and also carry out the WW test on the simulated data. The WW rejects $H_0$ with estimated probabilities, $0.209, 0.203, 0.200, 0.184, 0.123$ and $0.100$ for the corresponding $\sigma$ values ranging from $0.01$ to $0.2$. In contrast, Figure~\ref{fig:queue100} presents the estimated power of the test as a function of the sorting level and for different values of $\sigma$. It is evident that in this example for $\beta \approx 0.25$ the bubble sort based procedure performs better than WW as long as there is some congestion in the queue ($\sigma$ is not too high).

\section{Discussion and Concluding Remarks}
\label{sec:discussion}

We demonstrated that partial sorting can be used to design test statistics that are sensitive to both the order and the underlying distribution of the data. More specifically, we showed that applying a fixed number of bubble sort iterations to a data set with iid elements gives rise to a limiting curve that we called the bubble sort curve. This curve can be regarded as a generalization of the cdf and shares similar properties. We then turned to the uniform distance between the empirical bubble sort curve and the limiting bubble sort curve. Analogous to the Kolmogorov--Smirnov test statistic, we used this distance to define the bubble sort test statistic. We characterized its asymptotic distribution as a generalized Kolmogorov--Smirnov distribution that only depends on the sorting level. We showed in examples that the bubble sort test statistics may outperform the classical test statistics if data are not obtained from independent observations.

There are several straightforward extensions of the current work. We focused in this paper on the one-sample bubble sort test statistic, but extending the theory to a two-sample test statistic is elementary. Another extension involves the left tail of the data. In the current setup, the largest data points are guaranteed to be fully sorted. Roughly speaking, this means that the right tail of the data is compared to the right tail of the underlying cdf in the bubble sort test statistic. Something similar can be achieved for the left tail by multiplying all data point by -1 or by changing the bubble sort algorithm to sort from right to left instead of from left to right.

There are also several new research directions suggested by our results. Two main questions stand out. The first question is what happens if we use a different sorting algorithm. This may give rise to a different limiting curve and a different test statistic with potentially better statistical properties. We suspect that less efficient sorting algorithms may be beneficial here. The second question is whether we can combine bubble sort test statistics with different sorting levels $\beta_{1} < \beta_{2} < \dotsb < \beta_{m}$ for the same data set. The examples showed that there is no single best sorting level $\beta$, but combining different sorting levels may lead to a more powerful test. 

\bibliographystyle{amsplain}
\bibliography{ref}

\vfill

\footnotesize
\noindent KB’s research was funded by SNSF Grant 200021-196888. HMJ and YN were partly funded by Australian Research Council (ARC) Discovery Project DP180101602.
	
\end{document}